\documentclass[hidelinks,onefignum,onetabnum]{siamart251216}

\usepackage{amssymb,mathtools,mathrsfs,booktabs,color}
\usepackage{multirow,multicol}

\usepackage{enumitem}
\usepackage{graphicx,subcaption}  
\usepackage{array}

\allowdisplaybreaks[4]

\newcommand{\brab}[1]{\big(#1\big)}
\newcommand{\braB}[1]{\Big(#1\Big)}
\newcommand{\kbrab}[1]{\big[#1\big]}
\newcommand{\kbraB}[1]{\Big[#1\Big]}
\newcommand{\absB}[1]{\Big|#1\Big|}
\newcommand{\myvec}[1]{\boldsymbol{#1}}

\headers{Energy dissipation of IMEX-LMMs}{C. Quan, H. Wang, X. Wang, and C. Xu}

\title{Energy Dissipation Analysis of Implicit-Explicit Linear Multistep Methods for Gradient Flows Using a Simple Multiplier
\thanks{Submitted.
\funding{The work of C. Quan is supported by National Natural Science Foundation of China  (Grant No. 12271241), Guangdong Basic and Applied Basic Research Foundation (Grant No. 2023B1515020030), and Shenzhen Science and Technology Innovation Program (Grant No. JCYJ20230807092402004).  The work of X. Wang is supported by National Natural Science Foundation of China (Grant No. 12501551). The work of C. Xu is supported by National Natural Science Foundation of China (Grant No. 12371408).
}}
}

\author{Chaoyu Quan\thanks{School of Science and Engineering, The Chinese University of Hong Kong (Shenzhen), Shenzhen 518172, China; Shenzhen International Center for Industrial and Applied Mathematics, Shenzhen Research Institute of Big Data, Shenzhen, 518000, China 
  (\email{quanchaoyu@cuhk.edu.cn}).}
\and Huaijin Wang\thanks{School of Mathematical Sciences and Fujian Provincial Key Laboratory of Mathematical Modeling and High Performance Scientific Computing, Xiamen University, Xiamen 361005, China
  (\email{wanghuaijin@stu.xmu.edu.cn}).}
\and Xuping Wang\thanks{School of Science and Engineering, The Chinese University of Hong Kong (Shenzhen), Shenzhen 518172, China 
  (\email{wangxuping@cuhk.edu.cn}).}
\and Chuanju Xu\thanks{School of Mathematical Sciences and Fujian Provincial Key Laboratory of Mathematical Modeling and High Performance Scientific Computing, Xiamen University, Xiamen 361005, China
  (\email{cjxu@xmu.edu.cn}).}
}
\begin{document}

\sloppy
\maketitle
\begin{abstract}
This paper proposes a theoretical framework for establishing the energy dissipation of general implicit-explicit linear multistep methods (IMEX-LMMs) for gradient flows, by constructing a dissipative modified energy consisting of the original energy and a non-negative quadratic modification. We first test IMEX-LMMs with a simple multiplier, the first-order time difference of numerical solutions. Then, it is shown that the associated non-negative quadratic modification can be constructed if and only if two generating polynomials (corresponding to the LMM) are positive on $[-1,1]$. Based on this, the modified energy is proved to decay over time under a mild time-step restriction depending on the lower bounds of the associated generating polynomials. As a consequence, the energy dissipation of the well-known backward differentiation formula methods up to fifth order can be obtained straightforwardly. Furthermore, we construct for the first time (to the best of our knowledge) a sixth-order energy-dissipative IMEX-LMM and also prove the sixth-order barrier of energy-dissipative IMEX-LMMs when testing the simple multiplier. Some numerical experiments are conducted to verify our theoretical results.
\end{abstract}

\begin{keywords}
Linear multistep method, implicit-explicit method, gradient flow, energy dissipation
\end{keywords}

\begin{MSCcodes}
35K35, 35K55, 65M06, 65M12
\end{MSCcodes}

\section{Introduction}

In this work, we consider the following gradient flow with periodic boundary condition:
\begin{equation}\label{model:phase-field-eq}
    u_{t} = \mathcal{M} \kbrab{\mathcal{L}u + f(u)}, \quad u(0,\boldsymbol{x}) = u^{0}(\boldsymbol{x}) \quad \text{for } (\boldsymbol{x},t) \in \Omega\times (0,T],
\end{equation}
where $\Omega=(-\pi, \pi)^{d} \subset \mathbb{R}^{d} ~(d = 1,2,3)$ is the domain, $T > 0$ is the final time, $u^0$ is the initial value, $\mathcal{M}$ is a self-adjoint, linear, negative definite, and invertible operator, $\mathcal{L}$ is a self-adjoint, linear, and positive semi-definite (PSD) operator, and there exists a non-negative potential function $F$ such that $f(u)=F'(u)$.
Then, the problem \eqref{model:phase-field-eq} can be reformulated as
\begin{equation}\label{eq:gradient-flow}
    u_{t} = \mathcal{M} \frac{\delta E}{\delta u} \quad\text{with}\quad E[u] \coloneqq \int_{\Omega} \kbraB{\frac{1}{2} u \mathcal{L} u + F(u)} \mathrm{d} \boldsymbol{x}.
\end{equation}
The dynamics satisfies the following energy dissipation law:
\begin{equation}\label{ieq:energy-dissipation-law}
    \frac{\mathrm{d}E}{\mathrm{d}t} = \braB{\frac{\delta E}{\delta u}, \frac{\partial u}{\partial t}} = \braB{\mathcal{M}^{-1} \frac{\partial u}{\partial t}, \frac{\partial u}{\partial t}} \leq 0,
\end{equation}
where $(\cdot, \cdot)$ denotes the $L^{2}$ inner product defined by $(f, g) \coloneqq \int_{\Omega} f(\boldsymbol{x}) g(\boldsymbol{x}) \mathrm{d} \boldsymbol{x}$, and $\|\cdot\|$ denotes the induced $L^2$ norm. Notably, by choosing different operators $\mathcal{M}$, $\mathcal{L}$ and $f(u)$, different gradient flows are derived satisfying the energy dissipation property \eqref{ieq:energy-dissipation-law}. One can refer to \cref{sec:preliminaries} for rigorous settings. Currently, the energy dissipation law has been a crucial criterion in designing numerical methods for gradient flows. For example, some interesting one-step methods have been developed to preserve the energy dissipation law, including the exponential time differencing Runge--Kutta (RK) method \cite{FuYang-2022JCP,FuShenYang-2025ETDRK,LiQiaoWangZheng-2026PFC}, the integrating factor RK method \cite{LiaoWangWen-2024CIFRK}, the implicit-explicit RK method \cite{FuTangYang-2024IERK}, and the operator splitting method \cite{LiQuanXu-2022splitting}.


Linear multistep methods (LMMs), such as the $k$-step backward differentiation formula (BDF$k$) methods, are widely used for time integration of stiff differential equations due to their efficiency, such as the advection-diffusion equation \cite{ARW-1995LMM} and the Navier--Stokes equation \cite{ 2017IMEX, 2019IMEX}.
The second Dahlquist barrier indicates that no A-stable LMM can exceed second-order accuracy \cite{Dahlquist-1963-order-barrier}. For BDF1 and BDF2 methods, their energy stability and convergence for parabolic equations have been established via A-stability \cite{book-Thomee2006}.
Analyzing the stability of higher-order or non-A-stable LMMs has been an important issue in the past decades.


In Dahlquist's G-stability theory \cite{Dahlquist-1978-G-stability,baiocchi1989equivalence}, an important step is to find some coefficients $\mu_{i}$ and a PSD matrix $G = (g_{ij})_{i,j=1}^{k-1}$ satisfying the following decomposition:
\begin{equation}\label{eq:structure-general}
\begin{aligned}
    \braB{\sum_{i=0}^{k-1} s_{i} v^{n+1-i}, \sum_{i=0}^{k-1} \mu_{i} v^{n+1-i}} \geq & \sum_{i, j = 0}^{k-2} g_{i+1, j+1} (v^{n+1-i}, v^{n+1-j}) \\
    & - \sum_{i, j = 0}^{k-2} g_{i+1, j+1} (v^{n-i}, v^{n-j})
\end{aligned}
\end{equation}
for any sequence $\{v^{i}\}$ and a given tuple of coefficients $\boldsymbol{s}=[s_0,\ldots,s_{k-1}]^\top\in\mathbb{R}^k$, where
$\sum_{i=0}^{k-1} \mu_{i} v^{i}$ is the multiplier. 
Nevanlinna and Odeh developed a multiplier technique \cite{1981-NO-multiplier}, which was originally designed to study the stability of LMMs for contractive nonlinear ordinary differential equations and extended the analysis beyond the limitations of classical energy methods, particularly for non-A-stable LMMs. This was first generalized to the analysis of time discretization for partial differential equations in \cite{2013-NO-Lubich}, which was subsequently widely applied to stability analysis of fully implicit, linearly implicit, and implicit-explicit BDF methods up to fifth order \cite{2015SINUM-NO-Akrivis,2015NM-NO-AkrivisLubich}. More generally, Akrivis et al. \cite{2016MCOM-NO-Akrivis} constructed optimal multipliers for the BDF3 and BDF5 methods, respectively, and proved that the Nevanlinna--Odeh multiplier is optimal for the BDF4 method. Due to the nonexistence of Nevanlinna--Odeh multiplier for BDF6 method, relaxed multiplier conditions were introduced for stability analysis \cite{2021SINUM-NO-Akrivis-BDF6}. Recently, since the BDF7 method was unstable, a weighted and shifted seven-step BDF method was proposed for parabolic equations using the multiplier technique \cite{2025IMA-NO-Akrivis-BDF7}. The multiplier technique has potential to be applied to other LMMs, such as the improved Gear method \cite{1991WBDF}, generalized BDF method \cite{HuangShen-2024SINUM, HuangShen-2025MCOM} and modified BDF method \cite{2003MBDF}.

To establish the energy dissipation of IMEX-LMMs for gradient flows, one commonly uses the simple multiplier: $\mu_0=1$ and $\mu_i=0$ for $1 \leq i \leq k-1$ in \eqref{eq:structure-general}.
Furthermore, an additional positive quadratic term shall be used to control the explicit treatment of nonlinear term, which gives the following quadratic decomposition similar to \eqref{eq:structure-general}:
\begin{equation}\label{eq:structure2}
\begin{aligned}
    \Big(\sum_{i=0}^{k-1} s_{i} v^{n+1-i},v^{n+1}\Big) &\geq \sum_{i, j = 0}^{k-2} g_{i+1, j+1} (v^{n+1-i}, v^{n+1-j}) \\
    &\qquad\qquad - \sum_{i, j = 0}^{k-2} g_{i+1, j+1} (v^{n-i}, v^{n-j}) + \gamma \|v^{n+1}\|^2
\end{aligned}
\end{equation}
with a constant $\gamma>0$, for any sequence $\{v^{i}\}$. Using this decomposition and testing the scheme with $v^{n+1} = u^{n+1}-u^{n}$ (the first-order time difference of the numerical solution), the energy dissipation of the first- and second-order IMEX-BDF methods for gradient flows has been well-studied \cite{ChenShen1998,XuTang2006SINUM}. For the BDF3 method, the explicit form of $g_{i,j}$ can be derived and a modified energy can subsequently be constructed  \cite{hao2020third}. Further, the decomposition  \eqref{eq:structure2} for the BDF methods up to fifth order can be constructed, while the BDF6 method does not have such a decomposition if testing the scheme with $v^{n+1} =u^{n+1}-u^{n}$ \cite{LiaoTangZhou-2022CSIAM,LiaoKang-2024IMA-PFC}.

One question is how to overcome the fifth-order barrier under the aforementioned simple multiplier $\mu_0=1$ and $\mu_i=0$ for $1 \leq i \leq k-1$, for general LMMs. In this work, we propose a unified framework to establish the energy dissipation of IMEX-LMMs for gradient flows. Moreover, we construct a sixth-order energy-dissipative IMEX-LMM and demonstrate the nonexistence  of a seventh-order energy-dissipative IMEX-LMM under the simple multiplier. Our main contributions include:
\begin{enumerate}
    \item[(1)] By taking $u^{n+1}-u^{n}$ as the test function, we first provide two positive-definiteness conditions of the form \eqref{eq:structure2} that are sufficient to establish the energy dissipation of IMEX-LMMs in \cref{theorem: LMM for linear parabolic problem}.
    We then show that there exists a PSD matrix $G\in\mathbb{R}^{(k-1)\times(k-1)}$ satisfying the decomposition \eqref{eq:structure2} if and only if the associated real-valued generating polynomial is positive on $[-1,1]$ (see \cref{thm:lr:equi}).
    As a consequence, the dissipative modified energy can be constructed under mild time-step restrictions if two real-valued generating polynomials associated with the IMEX-LMM coefficients are positive on $[-1,1]$ (see \cref{thm:main-result}).

    \item[(2)] Considering the positivity of the real-valued generating polynomials and the order conditions, we solve a feasibility problem to derive, for the first time (to the best of our knowledge), a sixth-order energy-dissipative IMEX-LMM. Meanwhile, through Farkas' Lemma, we rigorously prove the nonexistence of seventh-order energy-dissipative IMEX-LMMs for the multiplier $v^{n+1} = u^{n+1}-u^n$, i.e., the sixth-order barrier.

\end{enumerate}

The rest of the paper is organized as follows. \Cref{sec:preliminaries} introduces some preliminaries. In \cref{sec:framework and ROR}, we propose positive-definiteness conditions sufficient for the energy dissipation of IMEX-LMMs and provide equivalent statements to these conditions.
In \cref{sec:ConstructionLMM}, this analysis framework is applied to the well-known IMEX-BDF$k$ methods, to construct a sixth-order energy-dissipative IMEX-LMM, and to prove the nonexistence of seventh-order IMEX-LMMs satisfying our framework. Some numerical experiments are presented to verify the convergence order and the energy dissipation law in \cref{sec:Numerical experiments}, followed by a conclusion in \cref{sec:conclusion}.

\section{Preliminaries}\label{sec:preliminaries}
This section collects some preliminaries required for the subsequent analysis; see, e.g., \cite{brezis2011functional,book-solving-ODE2}.

\subsection{Abstract settings}

Let $H$ be a Hilbert space with inner product $(\cdot,\cdot)$ and induced norm $\|\cdot\|$. 
Assume $S$ and $V$ are Hilbert spaces such that $S \hookrightarrow V \hookrightarrow H$ with continuous and compact embeddings. Let $V^\prime$ be the dual of $V$ and denote by $\langle\cdot,\cdot\rangle_{V',V}$ the duality pairing. Identifying $H$ with its dual $H^\prime$, we have the Gelfand triple $V \subset H \subset V^\prime$.

Assume that $\mathcal{L}: S \to V$ is a linear, self-adjoint, and PSD operator:
\[
(\mathcal{L} u, v) = (u, \mathcal{L} v), \quad (\mathcal{L} v, v) \geq 0, \quad \forall u,v\in S.
\]
The associated semi-norm is defined by $\| \mathcal{L}^{1/2} u\| \coloneqq (\mathcal{L} u, u)^{1/2}$.

Assume that $\mathcal{M}:V\to V'$ is a linear self-adjoint operator:
\[
\langle \mathcal{M} u, v\rangle_{V',V} = \langle \mathcal{M} v, u\rangle_{V',V}, \quad \forall u, v\in V,
\]
and that the bilinear form $a_{\mathcal M}(u,v):=-\langle \mathcal M u,v\rangle_{V',V}$ 
is continuous and coercive; that is, there exist constants
$C_{\mathcal M},c_{\mathcal M}>0$ such that
\[
|\langle \mathcal{M}u,v\rangle_{V',V}|
\leq C_{\mathcal M}\|u\|_V\|v\|_V,
\quad
-\langle \mathcal{M}v,v\rangle_{V',V}
\geq c_{\mathcal M}\|v\|_V^2,
\quad \forall u, v\in V,
\]
where $\|\cdot\|_V$ denotes the norm in $V$.
%
%
Then, by the Lax--Milgram theorem, the inverse operator $\mathcal{M}^{-1}: V^\prime \to V$ is well-defined, linear, and bounded; moreover, it is self-adjoint and negative definite.
For $v \in H$, one can define the $(-\mathcal{M})^{-1}$-induced norm as $\|(-\mathcal{M})^{-1/2} v\| \coloneqq (v, -\mathcal{M}^{-1}v)^{1/2}$.

Assume that $f:S\to V$ is a (possibly nonlinear) operator satisfying the global Lipschitz condition with constant $\ell_f>0$:
\begin{equation}
\label{eq:lip-f}
\|f(u)-f(v)\| \leq \ell_f \|u-v\|,\quad \forall u,v \in S.
\end{equation}
This implies that the potential function $F$ satisfies the following inequality:
\begin{equation}
\label{eq:lip-F}
(F(u) - F(v), 1) \leq (f(v), u-v) + \frac{\ell_f}{2} \| u-v \|^2,\quad \forall u,v\in S.
\end{equation}

In addition, we assume that there exist two constants $\zeta > 0$ and $0 < \eta \leq 1$ such that the following inequality holds:
\begin{equation}
\label{eq:l2-control}
\|v\| \leq \zeta \| (-\mathcal{M})^{-1/2} v\|^\eta  \| \mathcal{L}^{1/2} v \|^{1-\eta},\quad \forall v\in S.
\end{equation}

In the following examples, the assumptions are understood in the indicated spaces or subspaces so that the inequality \eqref{eq:l2-control} holds.
\begin{itemize}
    \item Allen--Cahn equation: $\mathcal{M}=-\mathcal{I}$, $\mathcal{L} = -\varepsilon^2 \Delta$,  $f(u) = u^3-u$, $H=L^2(\Omega)$, $V=L^2(\Omega)$, and $S = H^2(\Omega)$. The inequality \eqref{eq:l2-control} holds for $\zeta=1$ and $\eta=1$.
    \item Cahn--Hilliard equation: $\mathcal{M} = \Delta$, $\mathcal{L} = -\varepsilon^2 \Delta$, $f(u) = u^3-u$, $H=\dot{L}^2(\Omega)$, $V=\dot{H}^1(\Omega)$, and $S=\dot{H}^3(\Omega)$. The inequality \eqref{eq:l2-control} holds for $\zeta=\varepsilon^{-1/2}$ and $\eta=1/2$.
    \item Phase field crystal (PFC) equation: $\mathcal{M} = \Delta$, $\mathcal{L} = (\mathcal{I}+ \Delta)^2 + \mathcal{I}$, $f(u) = u^3-(\varepsilon+1) u$, $H=\dot{L}^2(\Omega)$, $V=\dot{H}^1(\Omega)$, and $S=\dot{H}^5(\Omega)$. The inequality \eqref{eq:l2-control} holds for $\zeta= (2\sqrt{2}-2)^{-1/4}$ and $\eta=1/2$.
\end{itemize}
Here, the Sobolev spaces are actually periodic Sobolev spaces as the periodic boundary condition is equipped with, and for a Hilbert space $X$, $\dot{X} = \{v\in X: \int_{\Omega} v \mathrm{d} \boldsymbol{x} = 0\}$ denotes the subspace of functions with zero mean, which is a common requirement for mass-conserving gradient flows.

\begin{remark}
\label{rem:truncationTech}
The Lipschitz continuity assumption for the nonlinear term is widely used in the analysis of numerical methods for gradient flows. For potentials that do not satisfy this condition globally, a common remedy in numerical simulations is to use truncation techniques; see, e.g., \cite{ShenYang-2010-Lipschitz,LiQiaoWang-2021MCOM-truncation,FuTangYang-2024IERK}. More precisely, one can replace $f$ by a globally Lipschitz continuous truncated function $f_R$, satisfying $f_R(s)=f(s)$ for $|s|\le R$ and that $|f_R(s)|$ is bounded for $|s|>R$. 
In practical numerical implementations, $R$ is usually chosen to be sufficiently large such that the numerical solution remains in $[-R,R]$, implying that $f_R$ and $f$ actually agree along the solution trajectory.
For example, we chose $R=2$ in \cref{ex:energy-PFC} for the PFC model where the absolute value of the numerical solution is observed to not exceed $R$.
\end{remark}

\subsection{Formulation of IMEX-LMMs}
Let $t_n = n\tau$, $n=0,\ldots,N$, be a uniform partition of $[0,T]$ with step size $\tau = T/N$. Given the starting approximations $\{u^i\}_{i=0}^{k-1} \in S$,  the $k$-step IMEX-LMM for \eqref{model:phase-field-eq} is
\begin{equation}\label{eq:model:lmm2}
    \sum_{i=0}^k A_i^{(k)} u^{n+1-i} = \tau \mathcal{M} \kbraB{\sum_{i=0}^k B_i^{(k)} \mathcal{L} u^{n+1-i} + \sum_{i=1}^k \hat{B}_i^{(k)} f(u^{n+1-i})}
\end{equation}
for $k-1\leq n\leq N-1,$
where the coefficients satisfy the $k$th-order conditions:
\begin{equation}
\label{constraint: order conditions of LMM}
\left\{
\begin{aligned}
& \sum_{i=0}^k A_i^{(k)} = 0, \\
& \sum_{i=0}^k A_i^{(k)} (-i)^{m+1} = (m+1) \sum_{i=0}^k B_i^{(k)} (-i)^{m}, \quad 0\leq m\leq k-1,\\
& \sum_{i=0}^k A_i^{(k)} (-i)^{m+1} = (m+1) \sum_{i=1}^k \hat{B}_i^{(k)} (-i)^{m}, \quad 0\leq m\leq k-1.\\
\end{aligned}
\right .
\end{equation}
In practice, the starting approximations $u^0,\ldots,u^{k-1}$ can be generated by a sufficiently accurate one-step method, for example the Gauss collocation RK method with small time steps. 
The energy dissipation analysis in the following content is for $n\geq k-1$.

To simplify the analysis, we further impose the following normalization condition:
\begin{equation}
\label{condition: normalization}
 \sum_{i=0}^k A_i^{(k)} (-i) =  \sum_{i=0}^k B_i^{(k)} = \sum_{i=1}^k \hat{B}_i^{(k)} = 1,
\end{equation}
which ensures that the discrete operator $\frac{1}{\tau} \sum_{i=0}^{k} A_i^{(k)} u(t_{n+1-i})$ is a first-order approximation of $u_t(t_{n+1})$.
By inverting $\tau \mathcal M$, \eqref{eq:model:lmm2} can be reformulated as
\begin{equation}\label{scheme: LMM-difference form}
\begin{aligned}
&\frac{1}{\tau} \sum_{i=0}^{k-1} a_{i}^{(k)} \mathcal{M}^{-1}\brab{\delta u^{n+1-i}}
- \frac{1}{2} \mathcal{L} (u^{n+1} + u^{n})
- \sum_{i=0}^{k-1} b_{i}^{(k)} \mathcal{L} \delta u^{n+1-i} \\
&\qquad\qquad\qquad= f(u^n)
+ \sum_{i=1}^{k-1} \hat{b}_i^{(k)} \delta f(u^{n+1-i}) \quad\text{for $k-1\leq n\leq N-1,$}
\end{aligned}
\end{equation}
where $\delta u^{n} \coloneqq u^{n} - u^{n-1}$ denotes the  first-order time difference and the reformulated coefficients are given by the cumulative sums:\begin{equation}
\label{def: coefficients a b c}
\left\{
\begin{aligned}
& a_i^{(k)} = \sum_{j=0}^i A_j^{(k)},&& 0\leq i\leq k-1 ,\\
& b_i^{(k)} = \sum_{j=0}^i B_j^{(k)} - 1 + \frac{1}{2}\delta_{i,0},&& 0\leq i \leq k-1, \\
& \hat{b}_i^{(k)} = \sum_{j=1}^i \hat{B}_j^{(k)} - 1 ,&& 1\leq i\leq k-1,
\end{aligned}
\right .
\end{equation}
with $\delta_{i,0}$ being the Kronecker delta. For simplicity, we define the coefficient vectors:
\begin{equation}
\label{eq:coef-vecs}
\boldsymbol{a}^{(k)} = [a_0^{(k)},\ldots,a_{k-1}^{(k)}]^\top,\  \boldsymbol{b}^{(k)}=[b_0^{(k)},\ldots,b_{k-1}^{(k)}]^\top, \ 
\text{and} \ 
\hat{\boldsymbol{b}}^{(k)}=[\hat{b}_1^{(k)},\ldots,\hat{b}_{k-1}^{(k)},0]^\top.
\end{equation}

\section{Energy dissipation preservation of IMEX-LMMs}\label{sec:framework and ROR}


In the following analysis, we establish the energy dissipation law for the reformulation \eqref{scheme: LMM-difference form} of the IMEX-LMMs. {To better present our result, we define the real-valued generating function} as 
\begin{equation}\label{def: polynomial M(theta)}
    M(\theta;\boldsymbol{s}) \coloneqq \sum_{m=0}^{k-1} s_{m} \cos(m \theta),\quad \theta \in [0, \pi],
\end{equation}
for a given coefficient vector $\boldsymbol{s} = [s_0,\ldots,s_{k-1}]^\top$. 
The vector $\boldsymbol{s}$ can be taken as either $\boldsymbol{a}^{(k)}$ or $\boldsymbol{b}^{(k)}$ defined in \eqref{eq:coef-vecs}.
By the variable transformation $\theta  =  \arccos x$, $x\in [-1,1]$, \eqref{def: polynomial M(theta)} can be transformed to a real-valued generating polynomial:
\begin{equation}
\label{eq:def:chb}
T(x;\boldsymbol{s}) \coloneqq M(\arccos x;\boldsymbol{s}),\quad -1\leq x \leq 1,
\end{equation}
which can be expressed as the Chebyshev expansion:
\begin{equation}
\label{eq:t-chebs}
T(x) = \sum_{m=0}^{k-1} s_m T_m(x),\quad -1\leq x \leq 1,
\end{equation}
where $T_m(x) = \cos(m\theta)$, $x = \cos\theta \in [-1, 1]$, denotes the $m$th Chebyshev polynomial of the first kind. It is well known that these polynomials satisfy the  three-term recurrence relation:
\[
T_0(x) = 1,\quad T_1(x) = x,\quad T_{m+1} = 2xT_m(x) - T_{m-1} (x), \quad m=1,2,\ldots.
\]
Then, it is clear that
\begin{equation}
\label{eq:same-min}
\min_{\theta \in [0,\pi]} M(\theta;\boldsymbol{s}) = \min_{ x\in [-1,1] } T(x;\boldsymbol{s}).
\end{equation} 
For a nonsymmetric matrix $G$, we use the convention throughout this paper that $G$ is called 
PSD if its symmetric part $(G+G^\top)/2$
is PSD. That is, $\boldsymbol{x}^{\top}G\boldsymbol{x}\geq 0$ for all $\boldsymbol{x}$.

Throughout this section, for two vectors of  functions $\boldsymbol{w} = [w^1,\ldots,w^m]^\top$ and $\boldsymbol{v}=[v^1,\ldots,v^m]^\top$, with $w^i,v^i\in H$, we define the product-space inner product by
\[
(\boldsymbol{w},\boldsymbol{v})
\coloneqq \sum_{i=1}^m (w^i,v^i).
\]
More generally, for a matrix $G=(g_{ij})\in\mathbb R^{m\times m}$, we define the $G$-weighted bilinear form by
\[
(\boldsymbol{w}, \boldsymbol{v})_G\coloneqq \sum_{i,j=1}^m g_{ij}(w^i,v^j).
\]
If $\mathscr{T}$ is a linear operator, typically either $\mathcal{L}$ or
$\mathcal{M}^{-1}$, then $\mathscr{T}\boldsymbol v$ is understood componentwise, namely $\mathscr{T}\boldsymbol v
=[\mathscr{T}v^1,\ldots,\mathscr{T}v^m]^\top$.

In addition, we shall define the following modified energy:
\begin{equation}
    \label{eq:EG}
    \boxed{
        E^{n}_{G} \coloneqq E[u^n] - \frac{1}{\tau}
        (\boldsymbol{v}_n,\mathcal{M}^{-1} \boldsymbol{v}_{n})_{G_a} 
        + 
        (\boldsymbol{v}_n,\mathcal{L} \boldsymbol{v}_{n})_{G_b}        
        + \ell_f  \sum_{i=1}^{k-1} \hat{c}_i^{(k)} \| \delta u^{n+1-i} \|^2
        }
    \end{equation}
for given upper triangular PSD matrices $G_a, G_b\in \mathbb{R}^{(k-1)\times (k-1)}$. 
Here, 
\begin{equation}
\label{eq:eng-vn}
\boldsymbol{v}_{n} = [\delta u^{n}, \delta u^{n-1}, \dots, \delta u^{n+2-k}]^\top,
\end{equation}
$\ell_f$ is the Lipschitz constant in \eqref{eq:lip-f}, {the inner product of two  vector functions is denoted by $\boldsymbol{u}^\top \boldsymbol{v} = \sum_i (u^i,v^i)$}, and
\begin{equation}\label{def:ci-hat}
    \hat{c}_i^{(k)} = \frac{1}{2} \sum_{j=i}^{k-1} | \hat{b}_j^{(k)}| \quad\text{for $i=1,\ldots,k-1$.}
\end{equation}

{We are now ready to state and prove our main result on the modified energy dissipation of the IMEX-LMMs for gradient flows.}

\begin{theorem}
\label{thm:main-result}
{Suppose that for the gradient flow \eqref{model:phase-field-eq} with Lipschitz continuous nonlinearity satisfying \eqref{eq:lip-f}, there exist $\zeta>0$ and $0<\eta\leq 1$ such that the inequality \eqref{eq:l2-control} holds.} 
Consider the $k$-step IMEX-LMM \eqref{scheme: LMM-difference form} with coefficient vectors $\boldsymbol{a}^{(k)}$, $\boldsymbol{b}^{(k)}$ and $\hat{\boldsymbol{b}}^{(k)}$ defined in \eqref{eq:coef-vecs}. 
  If 
\begin{equation}
\label{eq:lr:es}
\min_{\theta\in [0,\pi] } M(\theta; \boldsymbol{a}^{(k)}) \geq \alpha >0 \quad \text{and} \quad \min_{\theta\in[0,\pi]} M(\theta; \boldsymbol{b}^{(k)} ) \geq \beta > 0,
\end{equation}
or equivalently,
\begin{equation}
\label{eq:lr:es2}
\min_{x\in [-1,1] } T(x; \boldsymbol{a}^{(k)}) \geq \alpha > 0 \quad \text{and} \quad \min_{x\in[-1,1]} T(x; \boldsymbol{b}^{(k)} ) \geq \beta > 0,
\end{equation}
then there exist two upper triangular PSD matrices $G_a$ and $G_b$ such that the modified energy \eqref{eq:EG} satisfies the dissipation property $E_G^{n+1}\leq E_G^n$, under the time-step restriction:
\begin{equation}
\label{eq:tau-max}
0<\tau \leq \tau_{\max}, \quad \tau_{\max} \coloneqq 
\frac{\alpha \beta^{\bar{\eta}}}{| \frac{\ell_f}{2} + 2\ell_f  \hat{c}_1^{(k)} |^{1+\bar{\eta}} \eta (1-\eta)^{\bar{\eta}} \zeta^{2+2\bar{\eta}} },
\end{equation}
where $\ell_f$ is the Lipschitz constant in \eqref{eq:lip-f}, $\hat{c}_1^{(k)}$ is given by \eqref{def:ci-hat}, and $\bar{\eta} = \frac{1-\eta}{\eta}$. Here, we make the convention that $0^0=1$ for $\eta=1$.
\end{theorem}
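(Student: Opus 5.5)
The plan is to test the scheme \eqref{scheme: LMM-difference form} with $\delta u^{n+1}=u^{n+1}-u^n$ and to handle the three groups of terms separately.

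\textbf{Step 1 (two quadratic decompositions).} We need a decomposition of the form \eqref{eq:structure2} for both $\boldsymbol{a}^{(k)}$ and $\boldsymbol{b}^{(k)}$. Take an arbitrary sequence $\{v^i\}$ and truncate it to finitely many terms. The quadratic form $\sum_n \sum_i s_i (v^{n+1-i},v^{n+1})$ is then a Toeplitz form whose symbol is $M(\theta;\boldsymbol{s})$. If $M\geq\alpha$ on $[0,\pi]$, then $M-\alpha\geq 0$. By the Fejér--Riesz theorem we can write
\[
M(\theta;\boldsymbol{s})-\alpha=\Big|\sum_{j=0}^{k-1} q_j e^{\mathrm{i}j\theta}\Big|^2 .
\]
Comparing coefficients, $s_m$ (with $s_0$ counted with weight $1/2$ in the symmetric part) equals the autocorrelations of $q$, plus $\alpha$ at $m=0$. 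Completing squares then gives the local telescoping identity
\[
\Big(\sum_i s_i v^{n+1-i},v^{n+1}\Big)\geq \|\boldsymbol{w}_{n+1}\|_{G}^2-\|\boldsymbol{w}_{n}\|_{G}^2+\alpha\|v^{n+1}\|^2 .
\]
Here $\boldsymbol{w}_n=[v^n,\dots,v^{n+2-k}]$ and $G$ is an upper triangular matrix whose symmetric part is PSD; it is built from the cross terms $s_m v^{n+1-m}v^{n+1}$ minus $\sum_j q_jq_{j+m}$. This argument is essentially \cref{thm:lr:equi}, which I would invoke directly. It is used twice: with inner product $(-\mathcal M^{-1}\cdot,\cdot)$ for $\boldsymbol{a}^{(k)}$, giving $G_a$ and the term $\tfrac{\alpha}{\tau}\|(-\mathcal M)^{-1/2}\delta u^{n+1}\|^2$, and with $(\mathcal L\cdot,\cdot)$ for $\boldsymbol{b}^{(k)}$, giving $G_b$ and the term $\beta\|\mathcal L^{1/2}\delta u^{n+1}\|^2$.

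\textbf{Step 2 (the remaining terms).} The term $-\tfrac12(\mathcal L(u^{n+1}+u^n),\delta u^{n+1})$ equals $-\tfrac12$ times the difference of the quadratic energies. For the nonlinear term we use \eqref{eq:lip-F}:
\[
-(f(u^n),\delta u^{n+1})\geq (F(u^n)-F(u^{n+1}),1)-\tfrac{\ell_f}{2}\|\delta u^{n+1}\|^2 .
\]
For the extrapolation terms we combine Lipschitz continuity with Young's inequality:
\[
|\hat b_i(\delta f(u^{n+1-i}),\delta u^{n+1})|\leq \tfrac{\ell_f|\hat b_i|}{2}\big(\|\delta u^{n+1-i}\|^2+\|\delta u^{n+1}\|^2\big).
\]
Summing over $i$, the definition \eqref{def:ci-hat} makes the lagged pieces telescope into $\ell_f\sum_i\hat c_i\|\delta u^{n+1-i}\|^2$. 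Indeed, $\hat c_i-\hat c_{i+1}=|\hat b_i|/2$, so the difference of these sums between levels $n+1$ and $n$ absorbs the lagged pieces. The leftover is at most $(\tfrac{\ell_f}{2}+2\ell_f\hat c_1)\|\delta u^{n+1}\|^2$; the factor $2$ is generous, since $\sum_i|\hat b_i|/2$ plus the $\hat c_1$ term at the new level gives $2\hat c_1$. Collecting Steps 1 and 2,
\[
E_G^{n+1}-E_G^n\leq -\tfrac{\alpha}{\tau}\|(-\mathcal M)^{-1/2}\delta u^{n+1}\|^2-\beta\|\mathcal L^{1/2}\delta u^{n+1}\|^2+K\|\delta u^{n+1}\|^2,
\]
with $K=\tfrac{\ell_f}{2}+2\ell_f\hat c_1^{(k)}$.

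\textbf{Step 3 (time-step restriction, the main obstacle).} It remains to bound $K\|\delta u^{n+1}\|^2$ using \eqref{eq:l2-control}:
\[
K\|v\|^2\leq K\zeta^2 X^\eta Y^{1-\eta},\qquad X=\|(-\mathcal M)^{-1/2}v\|^2,\ Y=\|\mathcal L^{1/2}v\|^2 .
\]
Weighted Young's inequality with exponents $1/\eta$ and $1/(1-\eta)$ gives
\[
X^\eta Y^{1-\eta}\leq \eta\epsilon^{1/\eta}X+(1-\eta)\epsilon^{-1/(1-\eta)}Y .
\]
We choose $\epsilon$ so that the $Y$ coefficient equals $\beta$ exactly, and then require the $X$ coefficient to be at most $\alpha/\tau$. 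Solving for $\tau$ should reproduce $\tau_{\max}$ in \eqref{eq:tau-max} up to the precise constant bookkeeping, with the exponent $\bar\eta=(1-\eta)/\eta$ appearing naturally. The case $\eta=1$ is direct: $K\zeta^2X\leq \alpha X/\tau$, consistent with the convention $0^0=1$. I expect the delicate points to be the exact constant matching and ensuring that $G_a$, $G_b$ come out upper triangular with PSD symmetric part, so that $E_G^n$ is well defined. The analytic core is the Fejér--Riesz step, which is supplied by \cref{thm:lr:equi}.
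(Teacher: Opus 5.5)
Your proposal is correct and follows essentially the paper's route. The paper proves the theorem by combining \cref{thm:lr:equi} (the Fej\'er--Riesz factorization giving upper triangular PSD $G_a,G_b$ and the local telescoping decomposition) with \cref{theorem: LMM for linear parabolic problem} (testing with $\delta u^{n+1}$, applying \eqref{eq:lip-F}, the Cauchy--Schwarz bound telescoped via $\hat c_i^{(k)}$, and Young's inequality with exponents $1/\eta$, $1/(1-\eta)$), and your choice of $\epsilon$ reproduces \eqref{eq:tau-max} exactly; the one slip is in Step 2, where your inequality for $-(f(u^n),\delta u^{n+1})$ points the wrong way, whereas \eqref{eq:lip-F} gives $(f(u^n),\delta u^{n+1})\geq (F(u^{n+1})-F(u^n),1)-\frac{\ell_f}{2}\|\delta u^{n+1}\|^2$, which is the form your final collected estimate actually uses.
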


\begin{proof}
    {The proof will be completed by combining \cref{theorem: LMM for linear parabolic problem} on positive-definiteness conditions for energy dissipation and \cref{thm:lr:equi} on the equivalent statements of these conditions.}
\end{proof}

In the case of $\eta = 1$, $\tau_{\max}$ defined in \eqref{eq:tau-max} is independent of $\beta$, where $(1-\eta)^{\bar{\eta}}=0^0=1$. As a consequence, the condition $\beta> 0$ in \eqref{eq:lr:es} and \eqref{eq:lr:es2} can be relaxed to $\beta \geq 0$. However, for the sake of brevity, we do not treat this as a separate case.

\subsection{Positive-definiteness conditions for energy dissipation}
\label{subsec:edf}
In this subsection, we propose two positive-definiteness conditions that are sufficient to establish the energy dissipation law of IMEX-LMMs. Henceforth, $\boldsymbol{0}_{m\times n}$ denotes the zero matrix in $\mathbb{R}^{m\times n}$. When the dimension of $\boldsymbol{0}_{m\times n}$ is clear in the following content, we omit its subscripts.

\begin{lemma}\label{theorem: LMM for linear parabolic problem}
{Suppose that for the gradient flow \eqref{model:phase-field-eq}  with Lipschitz continuous nonlinearity satisfying \eqref{eq:lip-f}, there exist $\zeta>0$ and $0<\eta\leq 1$ such that the inequality \eqref{eq:l2-control} holds.}
Consider the $k$-step IMEX-LMM \eqref{scheme: LMM-difference form} with coefficient vectors $\boldsymbol{a}^{(k)}$, $\boldsymbol{b}^{(k)}$ and $\hat{\boldsymbol{b}}^{(k)}$ defined in \eqref{eq:coef-vecs}. 
If there exist upper triangular PSD matrices $G_{a}, G_{b} \in \mathbb{R}^{(k-1)\times(k-1)}$ and constants $\alpha, \beta > 0$ satisfying the following positive-definiteness conditions:
    \begin{equation}\label{assumption: Ua and Ub}
        \boldsymbol{x}^\top U_{a}^{(k)} \boldsymbol{x} \geq \alpha x_{1}^{2}, \quad \boldsymbol{x}^\top U_{b}^{(k)} \boldsymbol{x} \geq \beta x_{1}^{2}, \quad \forall \boldsymbol{x}= [x_{1}, x_{2}, \dots, x_{k}]^\top \in \mathbb{R}^{k},
    \end{equation}
    where
    \begin{equation}\label{def: matrix Ua and Ub}
        U_{a}^{(k)} = \boldsymbol{e}_1 (\boldsymbol{a}^{(k)})^\top - \begin{bmatrix}
            G_{a} & \boldsymbol{0} \\
            \boldsymbol{0}^\top & 0
        \end{bmatrix} + \begin{bmatrix}
            0 & \boldsymbol{0}^\top \\
            \boldsymbol{0} & G_{a}
        \end{bmatrix}, \quad 
        U_{b}^{(k)} = \boldsymbol{e}_1 (\boldsymbol{b}^{(k)})^\top - \begin{bmatrix}
            G_{b} & \boldsymbol{0} \\
            \boldsymbol{0}^\top & 0
        \end{bmatrix} + \begin{bmatrix}
            0 & \boldsymbol{0}^\top \\
            \boldsymbol{0} & G_{b}
        \end{bmatrix},
    \end{equation}
    with $\boldsymbol{e}_1 = [1,0,\dots,0]^\top \in \mathbb{R}^k$,  then the modified energy \eqref{eq:EG} satisfies the dissipation property $E^{n+1}_G \leq E_G^{n}$ under the time-step restriction \eqref{eq:tau-max}.
     
\end{lemma}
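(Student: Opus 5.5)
We take the inner product of the reformulated scheme \eqref{scheme: LMM-difference form} with the simple multiplier $\delta u^{n+1}=u^{n+1}-u^n$ and handle the resulting terms one at a time.

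\textbf{Step 1: the $\mathcal{M}^{-1}$-term.} Set $\boldsymbol{x}=[\delta u^{n+1},\dots,\delta u^{n+2-k}]^\top$, so that $\boldsymbol{v}_{n+1}$ consists of its first $k-1$ entries and $\boldsymbol{v}_n$ of its last $k-1$ entries. The term $\frac{1}{\tau}\sum_i a_i^{(k)}(\mathcal{M}^{-1}\delta u^{n+1-i},\delta u^{n+1})$ is the bilinear form $(\boldsymbol{x},\mathcal{M}^{-1}\boldsymbol{x})_{\boldsymbol{e}_1(\boldsymbol{a}^{(k)})^\top}$. Since $-\mathcal{M}^{-1}$ is self-adjoint and positive, the scalar inequality $\boldsymbol{x}^\top U_a^{(k)}\boldsymbol{x}\ge\alpha x_1^2$ lifts to the operator version. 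To see this, diagonalize $-\mathcal{M}^{-1}$ spectrally, or equivalently apply the scalar inequality to the coefficient vectors of $(-\mathcal{M}^{-1})^{1/2}\boldsymbol{x}$ in an orthonormal basis and sum. This gives
\[
-\frac1\tau\Big(\sum_i a_i\mathcal{M}^{-1}\delta u^{n+1-i},\delta u^{n+1}\Big)\ge \frac{\alpha}{\tau}\|(-\mathcal{M})^{-1/2}\delta u^{n+1}\|^2-\frac1\tau(\boldsymbol{v}_{n+1},\mathcal{M}^{-1}\boldsymbol{v}_{n+1})_{G_a}+\frac1\tau(\boldsymbol{v}_{n},\mathcal{M}^{-1}\boldsymbol{v}_{n})_{G_a}.
\]

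\textbf{Step 2: the $\mathcal{L}$-terms.} By the same argument with $\mathcal{L}$ and $U_b^{(k)}$, together with the identity $\frac12(\mathcal{L}(u^{n+1}+u^n),\delta u^{n+1})=\frac12\|\mathcal{L}^{1/2}u^{n+1}\|^2-\frac12\|\mathcal{L}^{1/2}u^n\|^2$, we obtain the quadratic energy difference plus $(\boldsymbol{v}_{n+1},\mathcal{L}\boldsymbol{v}_{n+1})_{G_b}-(\boldsymbol{v}_{n},\mathcal{L}\boldsymbol{v}_{n})_{G_b}+\beta\|\mathcal{L}^{1/2}\delta u^{n+1}\|^2$.

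\textbf{Step 3: the nonlinear term.} By \eqref{eq:lip-F}, $(f(u^n),\delta u^{n+1})\ge (F(u^{n+1})-F(u^n),1)-\frac{\ell_f}{2}\|\delta u^{n+1}\|^2$. For the explicit corrections we use Lipschitz continuity and Young's inequality:
\[
|\hat b_i(\delta f(u^{n+1-i}),\delta u^{n+1})|\le \tfrac{\ell_f}{2}|\hat b_i|\big(\|\delta u^{n+1-i}\|^2+\|\delta u^{n+1}\|^2\big).
\]
The telescoping weights $\hat c_i^{(k)}$ are designed for exactly this purpose: the sum $\sum_i\frac{\ell_f}{2}|\hat b_i|\|\delta u^{n+1-i}\|^2$ equals $\ell_f\sum_i\hat c_i\|\delta u^{n+2-i}\|^2-\ell_f\sum_i\hat c_i\|\delta u^{n+1-i}\|^2$ plus a leftover $\ell_f\hat c_1\|\delta u^{n+1}\|^2$, up to index shifts. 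Collecting everything yields
\[
E_G^{n+1}-E_G^n\le -\frac{\alpha}{\tau}\|(-\mathcal{M})^{-1/2}\delta u^{n+1}\|^2-\beta\|\mathcal{L}^{1/2}\delta u^{n+1}\|^2+\Big(\tfrac{\ell_f}{2}+2\ell_f\hat c_1^{(k)}\Big)\|\delta u^{n+1}\|^2 .
\]
The coefficient $2\hat c_1$ arises from the factor $\sum_i|\hat b_i|=2\hat c_1$ from Young's inequality, together with the telescoping leftover. We will track the exact constant in the write-up, but the form $\frac{\ell_f}{2}+2\ell_f\hat c_1$ matches \eqref{eq:tau-max}.

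\textbf{Step 4 (main obstacle): closing the estimate with \eqref{eq:l2-control}.} Write $K=\frac{\ell_f}{2}+2\ell_f\hat c_1$, $p=\|(-\mathcal{M})^{-1/2}\delta u^{n+1}\|$ and $q=\|\mathcal{L}^{1/2}\delta u^{n+1}\|$. Then $K\|\delta u^{n+1}\|^2\le K\zeta^2p^{2\eta}q^{2(1-\eta)}$. Applying the weighted Young inequality $XY\le \eta X^{1/\eta}+(1-\eta)Y^{1/(1-\eta)}$ with $X=\lambda p^{2\eta}$ and $Y=K\zeta^2\lambda^{-1}q^{2(1-\eta)}$, we choose $\lambda$ so that the $q$-term equals $\beta q^2$. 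This gives $K\zeta^2p^{2\eta}q^{2(1-\eta)}\le \beta q^2+C p^2$ with $C=\eta(1-\eta)^{\bar\eta}(K\zeta^2)^{1+\bar\eta}\beta^{-\bar\eta}$. Requiring $C\le\alpha/\tau$ gives exactly $\tau\le\tau_{\max}$. The case $\eta=1$ is immediate and uses the convention $0^0=1$. Hence $E_G^{n+1}\le E_G^n$. The delicate parts are the bookkeeping of the $\hat c_i$ telescoping and verifying that the Young exponents reproduce \eqref{eq:tau-max} exactly.
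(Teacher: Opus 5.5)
Your proposal follows the paper's proof essentially step for step: testing with $\delta u^{n+1}$, splitting $\boldsymbol{e}_1(\boldsymbol{a}^{(k)})^\top$ and $\boldsymbol{e}_1(\boldsymbol{b}^{(k)})^\top$ into $U_a^{(k)},U_b^{(k)}$ plus the telescoping $G_a,G_b$ terms, bounding the nonlinearity via \eqref{eq:lip-F} and Young's inequality with the $\hat c_i^{(k)}$ weights, and closing with weighted Young applied to \eqref{eq:l2-control}. Your constant $C$ reproduces $\tau_{\max}$ exactly, and your orthonormal-basis justification for lifting the scalar conditions to the operator setting is a detail the paper leaves implicit. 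The one slip is that your telescoping identity has the difference reversed; it should read $\frac{\ell_f}{2}\sum_{i=1}^{k-1}|\hat b_i^{(k)}|\,\|\delta u^{n+1-i}\|^2=\ell_f\sum_{i=1}^{k-1}\hat c_i^{(k)}\|\delta u^{n+1-i}\|^2-\ell_f\sum_{i=1}^{k-1}\hat c_i^{(k)}\|\delta u^{n+2-i}\|^2+\ell_f\hat c_1^{(k)}\|\delta u^{n+1}\|^2$ (check $k=2$). With this sign, its negative is exactly the increment of the $\ell_f\sum_i\hat c_i^{(k)}\|\delta u^{n+1-i}\|^2$ part of $E_G$, and your collected inequality with constant $\frac{\ell_f}{2}+2\ell_f\hat c_1^{(k)}$ is then correct.
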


\begin{proof}
Let $\boldsymbol{w}_{n} = [\delta u^{n}, \delta u^{n-1}, \dots, \delta u^{n+1-k} ]^\top$. Taking the inner product of \eqref{scheme: LMM-difference form} with $\delta u^{n+1}$, the derived left-hand side is
\begin{equation}
\label{eq:thm1:temp1}
\begin{aligned}
    \text{LHS} =&\  \frac{1}{\tau} 
    (\boldsymbol{w}_{n+1},\mathcal{M}^{-1} \boldsymbol{w}_{n+1})_{\boldsymbol{e}_1 (\boldsymbol{a}^{(k)})^\top}
    -\frac{1}{2} \| \mathcal{L}^{1/2} u^{n+1} \|^2 + \frac{1}{2} \| \mathcal{L}^{1/2} u^n \|^2  \\
    & - ( \boldsymbol{w}_{n+1}, \mathcal{L} \boldsymbol{w}_{n+1})_{\boldsymbol{e}_1 (\boldsymbol{b}^{(k)})^\top}.
\end{aligned}
\end{equation}
Using the definitions in \eqref{def: matrix Ua and Ub}, we observe that
\begin{equation}
\label{eq:thm1:temp2}
\begin{aligned}
    (\boldsymbol{w}_{n+1},\mathcal{M}^{-1} \boldsymbol{w}_{n+1})_{\boldsymbol{e}_1 (\boldsymbol{a}^{(k)})^\top} =&\   (\boldsymbol{w}_{n+1},\mathcal{M}^{-1} \boldsymbol{w}_{n+1})_{U_a^{(k)}} \\
    & + (\boldsymbol{v}_{n+1},\mathcal{M}^{-1} \boldsymbol{v}_{n+1})_{G_a}
     - (\boldsymbol{v}_{n},\mathcal{M}^{-1} \boldsymbol{v}_{n})_{G_a},
\end{aligned}
\end{equation}
and similarly,
\begin{equation}
\label{eq:thm1:temp3}
( \boldsymbol{w}_{n+1}, \mathcal{L} \boldsymbol{w}_{n+1})_{\boldsymbol{e}_1 (\boldsymbol{b}^{(k)})^\top}
= ( \boldsymbol{w}_{n+1}, \mathcal{L} \boldsymbol{w}_{n+1})_{U_b^{(k)}} +  ( \boldsymbol{v}_{n+1}, \mathcal{L} \boldsymbol{v}_{n+1})_{G_b} - ( \boldsymbol{v}_{n}, \mathcal{L} \boldsymbol{v}_{n})_{G_b},
\end{equation}
where $\boldsymbol{v}_n$ is given by \eqref{eq:eng-vn} that is actually the first $k-1$ entries of $\boldsymbol{w}_n$.
The derived right-hand side is
\begin{equation}
\label{eq:thm1:temp4}
\text{RHS}
=  \brab{f (u^n) , \delta u^{n+1}}
+ \sum_{i=1}^{k-1} \hat{b}_i^{(k)} \brab{\delta f (u^{n+1-i}), \delta u^{n+1}}.
\end{equation}
It is known from \eqref{eq:lip-F} that
\begin{equation}
\label{eq:thm1:temp5}
\brab{f (u^n) , \delta u^{n+1}} 
\geq \brab{F (u^{n+1}) - F (u^{n}), 1} - \frac{\ell_f}{2} \|\delta u^{n+1}\|^2 .
\end{equation}
By the Cauchy--Schwarz inequality, we have
\begin{align}
\sum_{i=1}^{k-1} \hat{b}_i^{(k)} &
\brab{\delta f (u^{n+1-i}), \delta u^{n+1}}
\ge
- \frac{\ell_f}{2} \sum_{i=1}^{k-1} |\hat{b}_i^{(k)}|
\brab{\|\delta u^{n+1-i} \|^2+ \|\delta u^{n+1}\|^2} \notag \\
&=
- \ell_f \sum_{i=1}^{k-1} \hat{c}_i^{(k)} \| \delta u^{n+1-i}\|^2 
+ \ell_f \sum_{i=1}^{k-1} \hat{c}_i^{(k)} \| \delta u^{n+2-i}\|^2
- 2\ell_f\hat{c}_1^{(k)} \|\delta u^{n+1}\|^2,
\label{eq:thm1:temp6}
\end{align}
where the definition of {$\hat{c}_i^{(k)}$} is given by \eqref{def:ci-hat}. 
Substituting \eqref{eq:thm1:temp5} and \eqref{eq:thm1:temp6} into \eqref{eq:thm1:temp4},
we arrive at
\begin{equation}
\label{eq:thm1:temp7}
\begin{aligned}
\text{RHS} \geq 
&\ \brab{F (u^{n+1}) - F (u^{n}), 1}  - \ell_f \sum_{i=1}^{k-1} \hat{c}_i^{(k)} \| \delta u^{n+1-i}\|^2 \\
& + 
 \ell_f \sum_{i=1}^{k-1} \hat{c}_i^{(k)} \| \delta u^{n+2-i}\|^2 -\braB{\frac{\ell_f}{2} +2\ell_f \hat{c}_1^{(k)}} \|\delta u^{n+1}\|^2.
\end{aligned}
\end{equation}
Combining \eqref{eq:thm1:temp7} with \eqref{eq:thm1:temp1}-\eqref{eq:thm1:temp3}, we have
\begin{equation}
\label{eq:thm1:temp8}
\begin{aligned}
&E^{n+1}_G - E^n_G \\
 \leq &\ 
\frac{1}{\tau}
(\boldsymbol{w}_{n+1},\mathcal{M}^{-1} \boldsymbol{w}_{n+1})_{U_a^{(k)}}
 - 
(\boldsymbol{w}_{n+1}, \mathcal{L} \boldsymbol{w}_{n+1})_{U_b^{(k)}}
+\braB{\frac{\ell_f}{2} + 2\ell_f \hat{c}_1^{(k)}} \|\delta u^{n+1}\|^2 \\
 \leq & -\frac{\alpha}{\tau} \| (-\mathcal{M})^{-1/2} \delta u^{n+1}\|^2 - \beta \|\mathcal{L}^{1/2}  \delta u^{n+1} \|^2
+\absB{\frac{\ell_f}{2} + 2\ell_f \hat{c}_1^{(k)}} \|\delta u^{n+1}\|^2. \\
\end{aligned}
\end{equation}

{In the case of $\eta = 1$, \eqref{eq:l2-control} gives $\|\delta u^{n+1}\|\leq \zeta \| (-\mathcal{M})^{-1/2} \delta u^{n+1}\|$. 
It is not difficult to find from \eqref{eq:thm1:temp8} that 
$E_{G}^{n+1}\leq E_G^n$
when $\tau$ satisfies \eqref{eq:tau-max}. 
In the case of $0<\eta<1$, we apply Young's inequality $ab\leq \frac{a^p}{p}+\frac{b^q}{q}$ with $p=\frac{1}{\eta}$ and $q = \frac{1}{1-\eta}$. Replacing $a$ and $b$ by  $\xi^{\eta-1} a^{2\eta}$ and $\xi^{1-\eta}  b^{2-2\eta}$ respectively for any $\xi>0$, yields
\begin{equation}
a^{2\eta} b^{2-2\eta} \leq \eta \xi^{-\frac{1-\eta}{\eta}} a^2 + (1-\eta)\xi b^2.
\label{eq:thm1:temp9}
\end{equation}
Applying \eqref{eq:thm1:temp9} to \eqref{eq:l2-control}, we have}
\[
\begin{aligned}
\|\delta u^{n+1}\|^2
 & \leq  \zeta^2 \| (-\mathcal{M})^{-1/2} \delta u^{n+1} \|^{2\eta} \| \mathcal{L}^{1/2} \delta u^{n+1} \|^{2-2\eta} \\
& \leq  \zeta^2 \kbraB{\eta \xi^{-\frac{1-\eta}{\eta}} \| (-\mathcal{M})^{-1/2} \delta u^{n+1}\|^2
+ (1-\eta) \xi \| \mathcal{L}^{1/2} \delta u^{n+1}\|^2}.
\end{aligned}
\]
To ensure the decay of $E_G^n$, we then impose
\begin{equation}
\label{eq:thm1:temp10}
-\frac{\alpha}{\tau} + \braB{\frac{\ell_f}{2} + 2\ell_f  \hat{c}_1^{(k)}} \zeta^2 \eta \xi^{-\frac{1-\eta}{\eta}} \leq 0 \ \text{and}\
 -\beta + \braB{\frac{\ell_f}{2} + 2\ell_f  \hat{c}_1^{(k)}} \zeta^2 (1-\eta) \xi \leq 0. 
\end{equation}
By taking 
\[
\xi = \frac{\beta}{|\frac{\ell_f}{2} + 2\ell_f \hat{c}_1^{(k)}| \zeta^2 (1-\eta)},
\]
{the second inequality of \eqref{eq:thm1:temp10} holds naturally} and we then obtain the time-step restriction \eqref{eq:tau-max}.
\end{proof}

\begin{remark}
    For the IMEX-BDF2 method, the positive-definiteness conditions of \cref{theorem: LMM for linear parabolic problem} in \eqref{assumption: Ua and Ub} give the  feasible regions of $(G_a,\alpha)$ and $(G_b,\beta)$, as shown in \cref{fig:BDF2-es}.
\end{remark}

\begin{figure}[htbp]
    \centering
    \begin{subfigure}[b]{0.45\textwidth}
        \centering
        \includegraphics[width=\textwidth]{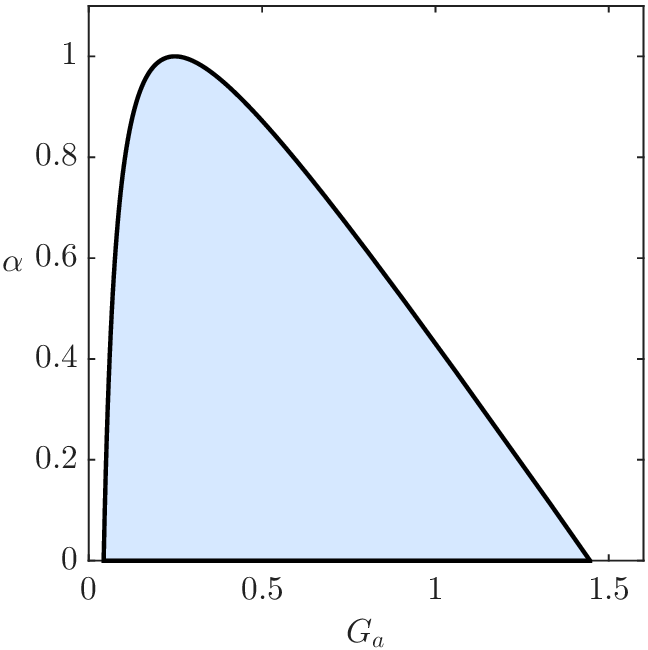}
    \end{subfigure}
    \quad
    \begin{subfigure}[b]{0.45\textwidth}
        \centering
        \includegraphics[width=\textwidth]{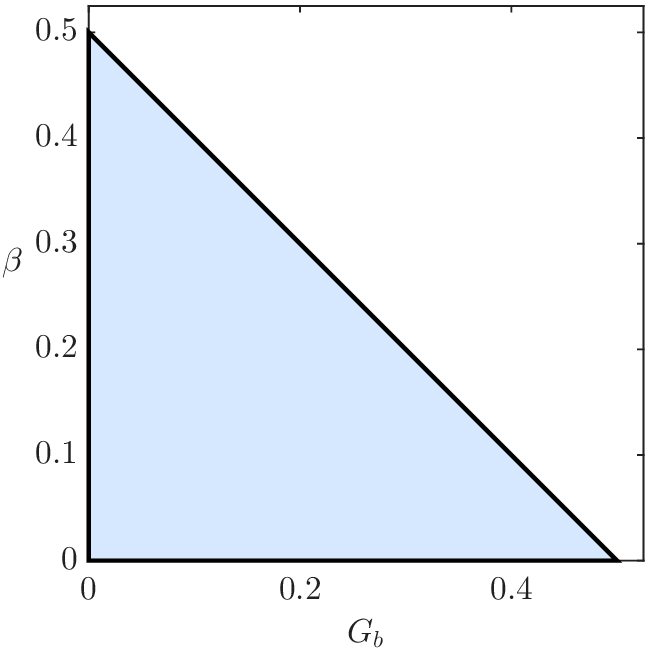}
    \end{subfigure}
    \caption{Feasible regions of $(G_a, \alpha)$ (left) and $(G_b, \beta)$ (right) for the IMEX-BDF2 method under the positive-definiteness conditions of \cref{theorem: LMM for linear parabolic problem} in \eqref{assumption: Ua and Ub} are shaded in light blue.}
    \label{fig:BDF2-es}
\end{figure}

\subsection{Equivalent statements of positive-definite conditions}
\label{subsec:ror}

In this subsection, we present equivalent statements of the positive-definiteness conditions of \cref{theorem: LMM for linear parabolic problem} in \eqref{assumption: Ua and Ub}.
For $k\geq 2$, to study the conditions in \eqref{assumption: Ua and Ub}, for a given vector $\boldsymbol{s}=[s_0,\ldots,s_{k-1}]^\top\in\mathbb{R}^k$, we consider the problem to find a scalar $\gamma > 0$ and an upper triangular PSD matrix $G\in\mathbb{R}^{(k-1)\times(k-1)}$ such that the following positive-definiteness condition holds:
\begin{equation}\label{assumption: U}
  \boldsymbol{x}^\top U \boldsymbol{x} \geq \gamma x_{1}^{2}, \quad \forall \boldsymbol{x}=[x_{1}, x_{2}, \ldots, x_{k}]^{\top} \in \mathbb{R}^{k},
\end{equation}
where the matrix $U$ is defined by
\begin{equation}\label{def: matrix U}
        U = \boldsymbol{e}_1 \boldsymbol{s}^\top - \begin{bmatrix}
            G & \boldsymbol{0} \\
            \boldsymbol{0}^\top & 0
        \end{bmatrix} + \begin{bmatrix}
            0 & \boldsymbol{0}^\top \\
            \boldsymbol{0} & G
        \end{bmatrix}, \quad 
        \boldsymbol{e}_{1} = [1, 0, \ldots, 0]^\top \in \mathbb{R}^{k}.
\end{equation}
It is not difficult to find the following equations:
\begin{equation}
\label{eq:sumU}
    \sum_{i=1}^{k-m} U_{i,i+m} = s_{m}, \quad\text{for}\ m=0,1,\ldots,k-1.
\end{equation}
This condition is necessary and sufficient for a one-to-one correspondence between $G$ and $U$. This is summarized as the following lemma.

\begin{lemma} \label{lem:uniq-G}
Let $\boldsymbol{s} = [s_0,s_1,\ldots,s_{k-1}]^\top \in \mathbb{R}^k$. 
{Given an upper triangular matrix $U$ satisfying \eqref{eq:sumU}, the upper triangular matrix $G$ can be determined uniquely by \eqref{def: matrix U}.} Moreover, if $U$ is PSD, then {$G$ is also PSD}.
\end{lemma}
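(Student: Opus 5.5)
The plan is to treat the two assertions separately. For uniqueness I will read off the entries of $G$ from \eqref{def: matrix U} diagonal by diagonal. For the PSD claim I will use a telescoping sum of quadratic forms $\boldsymbol{x}^\top U\boldsymbol{x}$ over shifted windows of a finitely supported sequence.

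\emph{Uniqueness and existence.} Extend $G$ by zero, i.e.\ set $G_{ij}=0$ whenever $i$ or $j$ lies outside $\{1,\dots,k-1\}$. Then \eqref{def: matrix U} reads entrywise
\[
U_{1,1+m}=s_m-G_{1,1+m},\qquad U_{i,i+m}=G_{i-1,i-1+m}-G_{i,i+m}\quad (i\ge 2),
\]
for $0\le m\le k-1$. The lower triangular parts of $U$ and $G$ both vanish, so only these upper entries matter. Along the $m$th superdiagonal this is a first-order recursion. Solving it gives
\[
G_{i,i+m}=s_m-\sum_{l=1}^{i}U_{l,l+m},\qquad 1\le i\le k-1-m,
\]
so $G$ is uniquely determined by $U$ and $\boldsymbol{s}$. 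What remains is the final equation on each diagonal, namely the entry $(k-m,k)$. There $G_{k-m,k}=0$, and the equation reads $U_{k-m,k}=G_{k-m-1,k-1}$. Substituting the formula above, this is exactly $\sum_{l=1}^{k-m}U_{l,l+m}=s_m$, which is \eqref{eq:sumU}. Hence \eqref{eq:sumU} is precisely the consistency condition, and the $G$ so defined satisfies \eqref{def: matrix U}.

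\emph{PSD property.} From \eqref{def: matrix U}, for $\boldsymbol{x}\in\mathbb{R}^k$ we have
\[
\boldsymbol{x}^\top U\boldsymbol{x}=x_1(\boldsymbol{s}^\top\boldsymbol{x})-\boldsymbol{x}_{1:k-1}^\top G\boldsymbol{x}_{1:k-1}+\boldsymbol{x}_{2:k}^\top G\boldsymbol{x}_{2:k}.
\]
Fix $\boldsymbol{y}\in\mathbb{R}^{k-1}$ and build a scalar sequence $c_n$ as follows: set $(c_0,c_{-1},\dots,c_{-k+2})=\boldsymbol{y}$, and set $c_n=0$ for $n\ge 1$ and for $n\le -k+1$. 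Define the windows $\boldsymbol{w}_n=[c_n,\dots,c_{n-k+1}]^\top\in\mathbb{R}^k$ and $\boldsymbol{v}_n=[c_n,\dots,c_{n-k+2}]^\top\in\mathbb{R}^{k-1}$. For $n\ge1$ the first entry of $\boldsymbol{w}_n$ vanishes, so the $\boldsymbol{e}_1\boldsymbol{s}^\top$ term drops out and
\[
\boldsymbol{w}_n^\top U\boldsymbol{w}_n=\boldsymbol{v}_{n-1}^\top G\boldsymbol{v}_{n-1}-\boldsymbol{v}_n^\top G\boldsymbol{v}_n .
\]
Summing over $n=1,\dots,k-1$ telescopes to $\boldsymbol{v}_0^\top G\boldsymbol{v}_0-\boldsymbol{v}_{k-1}^\top G\boldsymbol{v}_{k-1}=\boldsymbol{y}^\top G\boldsymbol{y}$, because $\boldsymbol{v}_{k-1}=\boldsymbol{0}$. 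Since $U$ is PSD, every summand is nonnegative, so $\boldsymbol{y}^\top G\boldsymbol{y}\ge0$ for all $\boldsymbol{y}$. Under the paper's convention for nonsymmetric matrices, this means $G$ is PSD.

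The only delicate point is the index bookkeeping in the uniqueness step, namely checking that the overdetermined last equation on each diagonal is exactly \eqref{eq:sumU}. The PSD part is a direct telescoping argument.
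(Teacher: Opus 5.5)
Your proof is correct and is essentially the paper's argument carried out entrywise. The paper rewrites \eqref{def: matrix U} as $G-J^\top GJ=\tilde U$, with $\tilde U$ the trailing $(k-1)\times(k-1)$ block of $U$ and $J$ the down-shift, and inverts it by a nilpotent Neumann series; this solves the same superdiagonal system as your recursion, only starting from the lower block instead of the first row, so your $G_{i,i+m}=s_m-\sum_{l=1}^{i}U_{l,l+m}$ agrees with the paper's $\sum_{l=i+1}^{k-m}U_{l,l+m}$ by \eqref{eq:sumU}. Your telescoping over zero-padded windows is exactly the paper's identity $\boldsymbol{y}^\top G\boldsymbol{y}=\sum_{m=0}^{k-2}(J^m\boldsymbol{y})^\top\tilde U(J^m\boldsymbol{y})$, since $\boldsymbol{w}_{m+1}=[0,(J^m\boldsymbol{y})^\top]^\top$.

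Your version makes explicit that \eqref{eq:sumU} is precisely the consistency condition the paper calls ``straightforward to verify'', and it uses only \eqref{def: matrix U}, not the closed form, for the PSD claim. One small slip: for $m=k-1$ the diagonal is the single entry $(1,k)$, where the equation is directly $U_{1,k}=s_{k-1}$, not $U_{k-m,k}=G_{k-m-1,k-1}$ (which would wrongly give $0$ under your zero extension).
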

\begin{proof}  
Suppose that $U\in \mathbb{R}^{k\times k}$ is an upper triangular  matrix satisfying \eqref{eq:sumU}. Then, $U$ clearly takes the form:
\[
U = \boldsymbol{e}_1 \boldsymbol{s}^\top + 
\begin{bmatrix}
\tilde{u}_0 & \tilde{\boldsymbol{u}}^\top  \\
\boldsymbol{0} & \tilde{U}
\end{bmatrix},
\]
where
$\tilde{u}_0\in\mathbb{R}$ and $\tilde{\boldsymbol{u}}=[\tilde{u}_1,\ldots,\tilde{u}_{k-2},0]^\top \in \mathbb{R}^{k-1}$ with
\[
\tilde{u}_m = - \sum_{i=2}^{k-m} U_{i, i+m},\quad m = 0,\ldots,k-2,
\]
and $\tilde{U}\in \mathbb{R}^{(k-1)\times (k-1)}$ is the submatrix of $U$ with 
\[
\tilde{U}_{i,j} = U_{i+1,j+1},\quad 1\leq i, j \leq k-1.
\]
Then, $U$ admits the factorization \eqref{def: matrix U} if and only if $G$ satisfies
\begin{equation}
\label{eq:g=u}
- \begin{bmatrix}
G & \boldsymbol{0} \\
\boldsymbol{0}^\top & 0
\end{bmatrix}
+ \begin{bmatrix}
0 & \boldsymbol{0}^\top \\
\boldsymbol{0} & G
\end{bmatrix}
=
\begin{bmatrix}
\tilde{u}_0 & \tilde{\boldsymbol{u}}^\top  \\
\boldsymbol{0} & \tilde{U}
\end{bmatrix},
\end{equation}
which leads to
\begin{equation}
\label{eq:uniq-G}
-J^\top G J + G = \tilde{U}
\quad \text{with} \quad
J=\begin{bmatrix}
0 & 0 & \cdots & 0 & 0 \\
1 & 0 & \cdots & 0 & 0 \\
0 & 1 & \cdots & 0 & 0 \\
\vdots & \vdots & \ddots & \vdots & \vdots \\
0 & 0 & \cdots & 1 & 0
\end{bmatrix}
\in \mathbb{R}^{(k-1)\times (k-1)}.
\end{equation}

Define the linear operator  $\mathscr{A} X \coloneqq J^\top X J$ for any $X\in \mathbb{R}^{(k-1)\times (k-1)}$.
 Then, \eqref{eq:uniq-G} becomes 
 \begin{equation}
 \label{eq:g2tu}
 (\mathcal{I} - \mathscr{A}) G = \tilde{U},
 \end{equation}
where $\mathcal{I}$ is the identity operator. 
Since $J$ is nilpotent of index $k-1$, we have
\[
\mathscr{A}^m X = (J^\top)^m X J^m = \boldsymbol{0},\quad \text{for}\quad m\geq k-1,
\]
and hence
\[
    (\mathcal{I}-\mathscr{A})\braB{\sum_{m=0}^{k-2} \mathscr{A}^m}
    =\sum_{m=0}^{k-2} \mathscr{A}^m-\sum_{m=1}^{k-1} \mathscr{A}^m
    =\mathcal{I}-\mathscr{A}^{k-1}=\mathcal{I}.
\]
Thus, $\mathcal{I}-\mathscr{A}$ is invertible
\begin{equation}
\label{eq:inv_i-a}
(\mathcal{I}-\mathscr{A})^{-1} = \sum_{m=0}^{k-2} \mathscr{A}^m.
\end{equation} 
Combining \eqref{eq:g2tu} with \eqref{eq:inv_i-a} gives the unique upper triangular matrix:
\begin{equation}
\label{eq:recover-g}
G = \sum_{m=0}^{k-2} \mathscr{A}^m \tilde{U} = \sum_{m=0}^{k-2} (J^\top)^m \tilde{U} J^m.
\end{equation}
{It is straightforward to verify that $G$ in \eqref{eq:recover-g} also satisfies \eqref{eq:g=u}. Consequently,  for given $U$, there exists $G$ satisfying \eqref{eq:g=u} if and only if there exists $G$ satisfying  \eqref{eq:uniq-G}. Hence, \eqref{eq:g=u} admits a unique solution $G$ given by  \eqref{eq:recover-g}.}

Note that if $U$ is PSD, then its submatrix $\tilde{U}$ {is also PSD.} For any 
$\boldsymbol{x}\in\mathbb{R}^{k-1}$, we have
\[
\boldsymbol{x}^\top G \boldsymbol{x} = \sum_{m=0}^{k-2} \boldsymbol{x}^\top (J^\top)^m \tilde{U} J^m \boldsymbol{x}  = \sum_{m=0}^{k-2}  (J^m \boldsymbol{x})^\top \tilde{U} (J^m \boldsymbol{x}) \geq 0.
\]
{That is to say, $G$ is also PSD.}
\end{proof}

We now focus on the positive-definiteness condition \eqref{assumption: U}. It is clear that \eqref{assumption: U} holds if and only if the symmetric matrix $\frac{1}{2}(U+U^\top) - \gamma \boldsymbol{e}_{1} \boldsymbol{e}_{1}^\top$ is PSD. {Then, it admits a decomposition:}
\begin{equation}\label{eq:P-components}
    \frac{1}{2} (U + U^\top) - \gamma \boldsymbol{e}_{1} \boldsymbol{e}_{1}^\top  = \sum_{\ell = 1}^r \boldsymbol{p}_\ell \boldsymbol{p}_\ell^\top
\end{equation}
for some $1\leq r \leq k$, where $\boldsymbol{p}_\ell =  [p_1^{(\ell)}, p_{2}^{(\ell)}, \ldots, p_{k}^{(\ell)}]^\top$ for $1\leq \ell \leq r$ are not necessarily nonzero (it is possible that the right-hand side of \eqref{eq:P-components} has zero rank).
Once the vectors $\{\boldsymbol{p}_\ell\}_{\ell=1}^r$ are specified, the entries of $U$ are determined by
\begin{equation}\label{eq: U from vector p}
    U_{ij} = 
    \begin{cases}
        \displaystyle \sum_{\ell=1}^r (p^{(\ell)}_{i} )^{2} + \gamma \delta_{i,1}, & i = j, \\
        \displaystyle 2 \sum_{\ell = 1}^r p^{(\ell)}_{i} p^{(\ell)}_{j}, & i < j, \\
        0, & i > j.
    \end{cases}
\end{equation}
Substituting \eqref{eq: U from vector p} into \eqref{eq:sumU} leads to the following system of equations:
\begin{equation}\label{eq: vector p system}
\begin{cases}
    \displaystyle \sum_{\ell=1}^r \sum_{i=1}^{k} (p^{(\ell)}_{i})^{2} = s_{0} - \gamma, & m = 0, \\
    2\displaystyle \sum_{\ell=1}^r \sum_{i=1}^{k-m} p^{(\ell)}_{i} p^{(\ell)}_{i+m} = s_{m}, & m = 1, 2, \ldots, k-1.
\end{cases}
\end{equation}

Therefore, for a given vector $\boldsymbol{s}$, any solution of \eqref{eq: vector p system} for some $\gamma > 0$ and $1\leq r\leq k$ yields an explicit construction of an upper triangular matrix $U$ via \eqref{eq: U from vector p}. By construction, the resulting matrix $U$ satisfies the positive-definiteness condition \eqref{assumption: U} as well as \eqref{eq:sumU}. As shown in \cref{lem:uniq-G}, the upper triangular PSD  matrix $G$ is then uniquely determined by $U$. 

The system \eqref{eq: vector p system} is, in general, not easy to solve, particularly for the case $r \geq 2$.
However, we will show in the subsequent proposition that for a given vector $\boldsymbol{s}$ and $\gamma > 0$,
the solvability of system \eqref{eq: vector p system} for some $1 \leq r \leq k$ is equivalent to the solvability of the case of $r=1$.
For brevity, we omit the superscripts and write
$\boldsymbol{p} = [p_1,\ldots,p_k]^\top$, which leads to the following system:
\begin{equation}\label{eq: vector p system rank1}
\begin{cases}
     \displaystyle \sum_{i=1}^{k} p_{i}^{2} = s_{0} - \gamma, & m = 0, \\
    2\displaystyle \sum_{i=1}^{k-m} p_{i} p_{i+m} = s_{m}, & m = 1, 2, \ldots, k-1.
\end{cases}
\end{equation}

\begin{proposition}\label{thm:lr:equi}
Let $\boldsymbol{s} = [s_0,\ldots,s_{k-1}]^\top \in \mathbb{R}^k$ and $\gamma > 0$.
The following statements are equivalent:
\begin{enumerate}[label=(\roman*), font=\upshape]
\item There exists an upper triangular PSD matrix $G$ such that the positive-definiteness condition \eqref{assumption: U} holds.
\item The system \eqref{eq: vector p system} admits a real solution for some $1 \leq r \leq k$.
\item The system \eqref{eq: vector p system rank1} admits a real solution.
\item The inequality $T(x;\boldsymbol{s}) \geq \gamma$ holds for all $x \in [-1,1]$, where $T(x;\boldsymbol{s})$ is the real-valued generating polynomial \eqref{eq:def:chb} associated with $\boldsymbol{s}$.
\end{enumerate}
\end{proposition}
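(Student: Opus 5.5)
I will prove the equivalences through the cycle (i)$\Rightarrow$(ii)$\Rightarrow$(iv)$\Rightarrow$(iii)$\Rightarrow$(i). Three of the four links are essentially bookkeeping with results already in place; the real content is a Fejér--Riesz factorization in (iv)$\Rightarrow$(iii).

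\textbf{(i)$\Rightarrow$(ii).} Given $G$ with \eqref{assumption: U}, the symmetric matrix $\frac12(U+U^\top)-\gamma\boldsymbol e_1\boldsymbol e_1^\top$ is PSD. Its spectral decomposition yields vectors $\boldsymbol p_\ell$ with $r\le k$ as in \eqref{eq:P-components}, and $U$ is then recovered through \eqref{eq: U from vector p}. Since $U$ is defined by \eqref{def: matrix U}, it satisfies \eqref{eq:sumU}, and substituting gives \eqref{eq: vector p system}.

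\textbf{(ii)$\Rightarrow$(iv).} For each $\ell$ set $P_\ell(z)=\sum_{i=1}^k p^{(\ell)}_i z^{i-1}$. On $|z|=1$, with $z=e^{\mathrm{i}\theta}$, we have
\[
|P_\ell(e^{\mathrm{i}\theta})|^2=\sum_i (p_i^{(\ell)})^2+2\sum_{m\ge1}\sum_i p^{(\ell)}_ip^{(\ell)}_{i+m}\cos(m\theta).
\]
Summing over $\ell$ and using \eqref{eq: vector p system} gives
\[
M(\theta;\boldsymbol s)-\gamma=\sum_\ell |P_\ell(e^{\mathrm{i}\theta})|^2\ge0.
\]
By \eqref{eq:same-min} this is exactly (iv).

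\textbf{(iv)$\Rightarrow$(iii).} This is the main step. The trigonometric polynomial $M(\theta;\boldsymbol s)-\gamma$ has degree at most $k-1$, real cosine coefficients, and is nonnegative on $[0,\pi]$, hence on all of $\mathbb R$ since it is even. By the Fejér--Riesz theorem there is a polynomial $P(z)=\sum_{i=1}^k p_iz^{i-1}$ of degree at most $k-1$ with $M(\theta;\boldsymbol s)-\gamma=|P(e^{\mathrm{i}\theta})|^2$.

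The obstacle is that $P$ must have \emph{real} coefficients. I would handle this directly. The Laurent polynomial $Q(z)=z^{k-1}(M-\gamma)$, obtained by writing $\cos m\theta=(z^m+z^{-m})/2$, has real coefficients, so its roots are closed under conjugation and under $z\mapsto1/\bar z$. Unimodular roots have even multiplicity, because $M-\gamma\ge0$ forces even-order zeros on the circle. Choose $P$ to contain one root from each reciprocal pair $\{w,1/\bar w\}$, taking the one with $|w|\le1$, and half the multiplicity of each unimodular root. Because the roots inside the disk are conjugation-closed and the unimodular roots come in conjugate pairs with equal multiplicity, $P$ can be taken real up to a scalar. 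That scalar is fixed by matching the leading coefficient, and it is real and positive since the product $|P|^2$ is real and positive. Comparing Fourier coefficients of $|P(e^{\mathrm{i}\theta})|^2$ with those of $M-\gamma$ then gives exactly \eqref{eq: vector p system rank1}. The degenerate case $M\equiv\gamma$ is trivial: take $\boldsymbol p=\boldsymbol 0$ when $s_0=\gamma$.

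\textbf{(iii)$\Rightarrow$(i).} A real solution $\boldsymbol p$ of \eqref{eq: vector p system rank1} is the $r=1$ case of \eqref{eq: vector p system}. Define the upper triangular $U$ by \eqref{eq: U from vector p}. Then $\frac12(U+U^\top)-\gamma\boldsymbol e_1\boldsymbol e_1^\top=\boldsymbol p\boldsymbol p^\top\succeq0$, which gives \eqref{assumption: U}, and $U$ satisfies \eqref{eq:sumU}.

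It remains to produce $G$. Lemma~\ref{lem:uniq-G} gives a unique upper triangular $G$ satisfying \eqref{def: matrix U}; its proof only needs \eqref{eq:sumU}. However, $U$ itself need not be PSD, since its $(1,1)$ entry is fine but the lemma's PSD hypothesis concerns all of $U$. I would instead argue as in the lemma's final step: $\tilde U$ is the trailing $(k-1)\times(k-1)$ block of $U$, and its symmetric part equals the corresponding block of $\boldsymbol p\boldsymbol p^\top$ (the $\gamma$ term lives only in the $(1,1)$ entry), so $\tilde U$ is PSD in the paper's sense. Then \eqref{eq:recover-g} shows that $G$ is PSD, which completes the cycle.
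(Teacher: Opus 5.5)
Your proof is correct and essentially the paper's. The paper also gets (iii)$\Leftrightarrow$(iv) from the identity for $|P(\mathrm{e}^{\mathrm{i}\theta})|^2$ plus Fej\'er--Riesz, citing a real-coefficient version where you supply the root-selection argument. It gets (ii)$\Rightarrow$(iii) by splitting $\boldsymbol{s}$ into $r$ rank-one pieces, each bounded below by $\gamma/r$, which is your identity $M(\theta;\boldsymbol{s})-\gamma=\sum_\ell|P_\ell(\mathrm{e}^{\mathrm{i}\theta})|^2$ in another form, and it closes (iii)$\Rightarrow$(i) through Lemma~\ref{lem:uniq-G}.

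The one slip is your claim that $U$ need not be PSD. In fact $\boldsymbol{x}^\top U\boldsymbol{x}=(\boldsymbol{p}^\top\boldsymbol{x})^2+\gamma x_1^2\ge 0$, so $U$ is PSD in the paper's sense and the lemma applies directly; your detour through $\tilde U$ just reruns the lemma's own proof.
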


\begin{proof}
    We first establish the equivalence of $\text{(iii)}$ and $\text{(iv)}$, and then demonstrate the implications $\text{(i)} \Rightarrow \text{(ii)} \Rightarrow \text{(iii)} \Rightarrow \text{(i)}$.

\indent $\textrm{(iii)} \Leftrightarrow \textrm{(iv)}$: 
Assume that $\text{(iii)}$ holds. Let 
\begin{equation}
\label{eq:pz}
P(z) = p_1+p_2 z + \dotsb+p_k z^{k-1} \quad \text{with}\quad z = \mathrm{e}^{\mathrm{i}\theta}\quad \text{and}\quad \theta\in \mathbb{R}.
\end{equation}
A direct computation yields
\begin{equation}
\label{eq:pro1:temp1}
\begin{aligned}
|P(\mathrm{e}^{\mathrm{i}\theta})|^2 & = \sum_{j=1}^k \sum_{\ell=1}^k p_j p_\ell \mathrm{e}^{\mathrm{i} (j-\ell) \theta}
= \sum_{j=1}^k \sum_{\ell=1}^k p_j p_\ell \cos [(j-\ell)\theta] \\
& = \sum_{j=1}^{k} p_j^2 + 2 \sum_{m=1}^{k-1} \sum_{j=1}^{k-m} p_j p_{j+m} \cos m\theta.
\end{aligned}
\end{equation}
Substituting \eqref{eq: vector p system rank1} into \eqref{eq:pro1:temp1} gives
\begin{equation}
\label{eq:p2=m-g}
|P(\mathrm{e}^{\mathrm{i}\theta})|^2 = (s_0 - \gamma) + \sum_{m=1}^{k-1} s_m \cos(m\theta) = M(\theta;\boldsymbol{s}) - \gamma,
\end{equation}
where $M(\theta;\boldsymbol{s})$ is the real-valued generating function associated with $\boldsymbol{s}$ defined in \eqref{def: polynomial M(theta)}.
Since $|P(\mathrm{e}^{\mathrm{i}\theta})|^2 \geq 0$, it follows that
$M(\theta;\boldsymbol{s}) \geq \gamma$ for all $\theta\in[0,\pi]$ and then  $\text{(iv)}$ holds by \eqref{eq:same-min}. 

Conversely, assume that $\text{(iv)}$ holds. Then, by \eqref{eq:same-min}, $M(\theta;\boldsymbol{s})-\gamma$ is nonnegative. By the Fej\'er-Riesz theorem (see e.g., \cite[Lemma 6.1.3]{daubechies1992ten}), there exists a polynomial $P(z) = p_1+p_2 z + \dotsb+p_k z^{k-1}$, with real coefficients such that $M(\theta;\boldsymbol{s}) - \gamma = | P(\mathrm{e}^{\mathrm{i}\theta}) |^2$. Expanding this identity and comparing coefficients yields
the system \eqref{eq: vector p system rank1}, thereby implying $\text{(iii)}$.

\indent $\text{(i)} \Rightarrow \text{(ii)}$: Assume that $\text{(i)}$ holds. If the rank of $\frac{1}{2} (U + U^\top) - \gamma \boldsymbol{e}_{1} \boldsymbol{e}_{1}^\top$ is zero, then $\gamma=s_0$ and $s_m=0$ for $1\leq m \leq k-1$. In this case, the system \eqref{eq: vector p system} admits the trivial solution
by choosing $r=1$ and $\boldsymbol{p}_1=\boldsymbol{0}$.
Otherwise, we choose $r$ as the rank of $\frac{1}{2} (U + U^\top) - \gamma \boldsymbol{e}_{1} \boldsymbol{e}_{1}^\top$; then, $\text{(ii)}$ follows immediately from the derivation of the system \eqref{eq: vector p system}.

\indent $\text{(ii)} \Rightarrow \text{(iii)}$: Assume that $\text{(ii)}$ holds with $r>1$. We decompose $\boldsymbol{s}$ into $r$ components. For each $\ell = 1,\ldots,r$, define the coefficients:
\[
s^{(\ell)}_0 = \sum_{i=1}^k (p_i^{(\ell)})^2 + \frac{\gamma}{r},\quad 
s^{(\ell)}_m = 2\sum_{i=1}^{k-m} p_i^{(\ell)} p_{i+m}^{(\ell)}, \quad m = 1,\ldots,k-1.
\]
Based on the established equivalence $\text{(iii)} \Leftrightarrow \text{(iv)}$, the real-valued generating polynomial associated with the $\ell$-th component satisfies
\[
T(x; \boldsymbol{s}^{(\ell)}) = \sum_{m=0}^{k-1} s_m^{(\ell)} T_m(x) \geq \frac{\gamma}{r},\quad \forall x \in [-1,1],
\]
where $\boldsymbol{s}^{(\ell)} = [s_0^{(\ell)},\ldots, s_{k-1}^{(\ell)}]^\top$ and $T_m(x)$ is the $m$th Chebyshev polynomial of the first kind as aforementioned. Summing over $\ell$, and using $s_m = \sum_{\ell=1}^r s_m^{(\ell)}$, we obtain
\[
T(x;\boldsymbol{s}) = \sum_{m=0}^{k-1} s_m T_m(x) = \sum_{m=0}^{k-1} \sum_{\ell=1}^r s_m^{(\ell)}  T_m(x) 
= \sum_{\ell=1}^r T(x; \boldsymbol{s}^{(\ell)} ) \geq \gamma,\quad \forall x\in [-1,1].
\]
This implies $\text{(iv)}$, which is equivalent to $\text{(iii)}$.

\indent $\text{(iii)} \Rightarrow \text{(i)}$: Assume that $\text{(iii)}$ holds. We define $U$ according to \eqref{eq: U from vector p} with $r=1$. This construction ensures $ \frac{1}{2} (U + U^\top) - \gamma \boldsymbol{e}_{1} \boldsymbol{e}_{1}^\top = \boldsymbol{p}\boldsymbol{p}^\top$, which implies the positive-definiteness condition \eqref{assumption: U}, as well as \eqref{eq:sumU}. Since $U$ is PSD, the existence of the upper triangular PSD matrix $G$ follows from \cref{lem:uniq-G}.
\end{proof}
\begin{remark}
The equivalence in \cref{thm:lr:equi} remains valid in the limiting case of $\gamma=0$, from the same proof. In this case, the decomposition \eqref{eq:structure2} reduces to \eqref{eq:structure-general} under the simple multiplier, and the corresponding condition becomes the non-negativity of the associated generating polynomial on $[-1,1]$. In the energy-dissipation analysis of IMEX-LMMs for gradient flows, we use the case of $\gamma>0$, which provides a positive quadratic term used to control the explicit treatment of the nonlinear term.
\end{remark}

\subsection{Determination of \texorpdfstring{$\gamma$}{gamma} and \texorpdfstring{$G$}{G}} \label{subsec:construct-g&G}
Let $\gamma_{\max}$ be the maximum value such that there exists an upper triangular PSD matrix $G$ satisfying the positive-definiteness condition \eqref{assumption: U}. Then, by the equivalence of $\text{(i)}$ and $\text{(iv)}$ in \cref{thm:lr:equi}, for a given vector $\boldsymbol{s} = [s_0,\ldots,s_{k-1}]^\top$, $\gamma_{\max}$ is given by
\begin{equation}
\label{eq:max-gamma}
\gamma_{\max} = \min_{x \in [-1,1]} T(x; \boldsymbol{s}).
\end{equation}
Consequently, if $\gamma_{\max}> 0$, by selecting $\gamma$ such that $0 < \gamma \leq \gamma_{\max}$, and the equivalence of $\text{(iii)}$ and $\text{(iv)}$ in \cref{thm:lr:equi}, there exists a vector $\boldsymbol{p}$ satisfying \eqref{eq: vector p system rank1}, from which the matrix $U$ can be constructed by \eqref{eq:P-components} with $r=1$ and {$G$ then can be constructed} by \eqref{eq:recover-g}.

We now focus on how to solve \eqref{eq: vector p system rank1}. 
We define the Laurent polynomial:
\begin{equation}\label{def:L(z)}
    L(z) \coloneqq s_0-\gamma + \sum_{m=1}^{k-1} \frac{s_m}{2} (z^m + z^{-m}),\quad z\in \mathbb{C}.
\end{equation}
Note that $L(\mathrm{e}^{\mathrm{i}\theta}) = M(\theta;\boldsymbol{s})-\gamma$ for $\theta\in\mathbb{R}$, where $M(\theta;\boldsymbol{s})$ is the real-valued function associated with $\boldsymbol{s}$ defined in \eqref{def: polynomial M(theta)}.
With the definition \eqref{eq:pz} of $P(z)$ and by \eqref{eq:p2=m-g}, we have the factorization $L(z) = P(z) P(z^{-1})$ for all $z$ satisfying $|z|=1$, which implies $$L(z) = P(z) P(z^{-1}),\quad \forall z \in \mathbb{C} \setminus \{0\}.$$
We next determine the polynomial $P(z)$, whose coefficients form the vector $\boldsymbol{p}$ satisfying \eqref{eq: vector p system rank1}, via the roots of $L(z)$.

If $P(z) \equiv 0$, we have $\boldsymbol{p}=\boldsymbol{0}$ and $L(z) \equiv 0$. Otherwise, by the fundamental theorem of algebra, $P(z)$ can be factorized into
\begin{equation}\label{factorization:P(z)}
    P(z) = a_{0} \prod_{\{j: \mathrm{Im}{(z_j)}>0\}} (z-z_j)^{m_j} (z-z_j^*)^{m_j} \prod_{\{l: z_l \in \mathbb{R}\}} (z-z_l)^{m_l},
\end{equation}
where $(z_j,z_j^*)$ denote the $j$th root pair with multiplicity $m_j$ and $a_0\neq 0$ is some real constant. Consequently, $L(z)$ can be factorized into
\begin{equation}\label{factorization:L(z)}
\begin{aligned}
L(z) &= a_0^2 
\prod_{\{j: \mathrm{Im}(z_j)>0\}} 
(z-z_j)^{m_j} (z-z_j^*)^{m_j} (z^{-1}-z_j)^{m_j} (z^{-1}-z_j^*)^{m_j} \\
&\quad \cdot \prod_{\{l: z_l \in \mathbb{R}\}} 
(z-z_l)^{m_l} (z^{-1}-z_l)^{m_l}.
\end{aligned}
\end{equation}
Therefore, once the roots of $L(z)$ defined in \eqref{def:L(z)} are computed, we then obtain the factorization \eqref{factorization:L(z)} of $L(z)$ and consequently the factorization \eqref{factorization:P(z)} of $P(z)$, which gives the real coefficients $p_i, ~i = 1,\ldots,k$. Here, we emphasize that the choice of $P(z)$ is not unique and one can simply choose the roots $z_j$ satisfying $|z_j|\leq 1$.

Based on the above analysis, the following procedure summarizes the construction of
$\gamma$ and the associated matrix $G$ for a given vector $\boldsymbol{s}$,
such that the positive-definiteness condition \eqref{assumption: U} is satisfied.

\begin{enumerate}[label=\textit{Step \arabic*.}, leftmargin=4em, itemsep=1ex]
\item Compute $\gamma_{\max}$ according to \eqref{eq:max-gamma}. If $\gamma_{\max} \leq 0$, then no matrix $G$ satisfies the positive-definiteness condition \eqref{assumption: U}. Otherwise, select a $\gamma$ such that $0 < \gamma \leq \gamma_{\max}$.
\item Consider $L(z)$ defined in \eqref{def:L(z)}. If $L(z)=0$, then $G=\boldsymbol{0}$ is obtained. Otherwise, by computing the roots of the Laurent polynomial $L(z)$, derive the factorization \eqref{factorization:L(z)} of $L(z)$ and then the factorization \eqref{factorization:P(z)} of $P(z)$. The vector $\boldsymbol{p}$ is then computed from the coefficients of $P(z)$.
\item Compute $U$ via \eqref{eq:P-components} with $r=1$, and then obtain $G$ from \eqref{eq:recover-g}.
\end{enumerate}
\section{Construction of energy-dissipative IMEX-LMMs}\label{sec:ConstructionLMM}

This section is devoted to the construction of energy-dissipative IMEX-LMMs within the framework of \cref{thm:main-result}. We begin by revisiting the well-known IMEX-BDF$k$ methods, and then present a general approach for constructing high-order IMEX-LMMs, which gives a sixth-order energy-dissipative IMEX-LMM for the first time (to the best of our knowledge). Finally, we provide a rigorous proof of the nonexistence of seventh-order energy-dissipative IMEX-LMMs for the multiplier $v^{n+1} = u^{n+1}-u^n$ in \eqref{eq:structure2}.

\subsection{A revisit of IMEX-BDF methods}

By setting $B_i^{(k)}=\delta_{i,0}$ in \eqref{eq:model:lmm2} and determining the coefficients
$\{A_i^{(k)}\}_{i=0}^k$ and $\{\hat{B}_i^{(k)}\}_{i=1}^k$
from the order conditions in \eqref{constraint: order conditions of LMM} and \eqref{condition: normalization},
we obtain the IMEX-BDF$k$ methods. 
Consider the reformulation \eqref{scheme: LMM-difference form} of IMEX-BDF$k$ methods:
\begin{equation}\label{scheme: BDFk-difference form}
\begin{aligned}
&\frac{1}{\tau} \sum_{i=0}^{k-1} a_{i}^{(k)} \mathcal{M}^{-1}\brab{\delta u^{n+1-i}}
- \frac{1}{2} \mathcal{L} (u^{n+1} + u^{n}) - \frac{1}{2} \mathcal{L} \delta u^{n+1} \\
&\qquad\qquad\qquad\qquad= f(u^n) + \sum_{i=1}^{k-1} \hat{b}_i^{(k)} \delta f(u^{n+1-i}) \quad\text{for $k-1\leq n \leq N-1$,}
\end{aligned}
\end{equation}
where the coefficients $a_{i}^{(k)}$ and $\hat{b}_i^{(k)}$ are listed in \cref{tab:BDF-coefficients2}.

\begin{table}[htbp]
\centering
\begin{tabular}{c|cccccc}
    \toprule
    $k$ & $a_0^{(k)}$ & $a_1^{(k)}$ & $a_2^{(k)}$ & $a_3^{(k)}$ & $a_4^{(k)}$ & $a_5^{(k)}$ \\
    \midrule
    1 & $1$ \\[4pt]
    2 & $\frac{3}{2}$ & $-\frac{1}{2}$ \\[4pt]
    3 & $\frac{11}{6}$ & $-\frac{7}{6}$ & $\frac{1}{3}$ \\[4pt]
    4 & $\frac{25}{12}$ & $-\frac{23}{12}$ & $\frac{13}{12}$ & $-\frac{1}{4}$ \\[4pt]
    5 & $\frac{137}{60}$ & $-\frac{163}{60}$ & $\frac{137}{60}$ & $-\frac{21}{20}$ & $\frac{1}{5}$ \\[4pt]
    6 & $\frac{49}{20}$ & $-\frac{71}{20}$ & $\frac{79}{20}$ & $-\frac{163}{60}$ & $\frac{31}{30}$ & $-\frac{1}{6}$ \\
    \bottomrule
\end{tabular}
\quad 
\begin{tabular}{ccccc}
    \toprule
     $\hat{b}_1^{(k)}$ & $\hat{b}_2^{(k)}$ & $\hat{b}_3^{(k)}$ & $\hat{b}_4^{(k)}$ & $\hat{b}_5^{(k)}$   \\
    \midrule
      \\[4pt]
     $1$  \\[4pt]
     $2$ & $-1$ \\[4pt]
     $3$ & $-3$ & $1$  \\[4pt]
     $4$ & $-6$ & $4$ & $-1$  \\[4pt]
     $5$ & $-10$ & $10$ & $-5$ & $1$  \\
    \bottomrule
\end{tabular}
\caption{Coefficients $a_i^{(k)}$ and $\hat{b}_i^{(k)}$ for IMEX-BDF$k$ methods.}
\label{tab:BDF-coefficients2}
\end{table}

\begin{lemma} \label{lem:bdf2-6}
The conditions in \eqref{eq:lr:es2} hold for IMEX-BDF$k$ methods \eqref{scheme: BDFk-difference form} with $1 \leq k \leq 5$, but fail for the IMEX-BDF6 method.
\end{lemma}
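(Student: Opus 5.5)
Since $B_i^{(k)}=\delta_{i,0}$ for IMEX-BDF$k$, the definition \eqref{def: coefficients a b c} gives $b_0^{(k)}=1-1+\tfrac12=\tfrac12$ and $b_i^{(k)}=0$ for $i\ge 1$. Hence $T(x;\boldsymbol{b}^{(k)})\equiv \tfrac12$, so the second condition in \eqref{eq:lr:es2} holds trivially with $\beta=\tfrac12$ for every $k$. The lemma therefore reduces to two facts: $\min_{[-1,1]}T(x;\boldsymbol{a}^{(k)})>0$ for $1\le k\le 5$, and this minimum is negative for $k=6$.

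To get a usable formula for $M(\theta;\boldsymbol{a}^{(k)})$, I would write the BDF characteristic polynomial as $\sum_i A_i^{(k)}\zeta^i=\sum_{j=1}^k \frac{1}{j}(1-\zeta)^j$. The cumulative sums $a_i^{(k)}$ are then the coefficients of $\sum_i A_i^{(k)}\zeta^i/(1-\zeta)$, that is,
\[
\sum_{i=0}^{k-1} a_i^{(k)}\zeta^i=\sum_{j=1}^{k}\frac{(1-\zeta)^{j-1}}{j}.
\]
This can be checked against \cref{tab:BDF-coefficients2}. Since $M(\theta;\boldsymbol{a}^{(k)})=\mathrm{Re}\sum_i a_i^{(k)}\mathrm{e}^{\mathrm{i}i\theta}$ and $1-\mathrm{e}^{\mathrm{i}\theta}=2\sin(\theta/2)\,\mathrm{e}^{\mathrm{i}(\theta-\pi)/2}$, we obtain
\[
M(\theta;\boldsymbol{a}^{(k)})=\sum_{j=1}^{k}\frac{(2\sin(\theta/2))^{j-1}}{j}\cos\!\Big(\frac{(j-1)(\theta-\pi)}{2}\Big).
\]
Setting $\rho=2\sin(\theta/2)\in[0,2]$ and $\phi=(\pi-\theta)/2\in[0,\pi/2]$, this reads $M=\sum_{j=1}^k \rho^{j-1}\cos((j-1)\phi)/j$. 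This is a convenient one-parameter expression for both the positivity proofs and the counterexample.

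\textbf{Positivity for $1\le k\le 5$.} The cases $k=1,2,3$ are immediate from the Chebyshev form \eqref{eq:t-chebs}:
\begin{itemize}
\item $T=1$ for $k=1$;
\item $T=\tfrac32-\tfrac x2\ge 1$ for $k=2$;
\item $T=\tfrac32-\tfrac76x+\tfrac23x^2$ for $k=3$, whose discriminant is negative.
\end{itemize}
For $k=4,5$, I would expand $T(x;\boldsymbol{a}^{(k)})$ explicitly as a cubic, respectively quartic, with rational coefficients using $T_2=2x^2-1$, $T_3=4x^3-3x$, $T_4=8x^4-8x^2+1$. I would then certify that it is positive on $[-1,1]$ in either of two ways. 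One is to locate the critical points, via the roots of $T'$ (explicit for the cubic, a Sturm sequence for the quartic's cubic derivative), and evaluate a rational lower bound there. The other is to exhibit a certificate of the form $T=\sigma_0+(1-x^2)\sigma_1$ with $\sigma_0,\sigma_1$ sums of squares plus a positive constant. The numerical minima are small but positive; in the BDF5 case I expect a value near $x=-1$ or at an interior point on the order of $10^{-1}$. The main obstacle is this step, and in particular making the $k=5$ bound rigorous rather than numerical. I would first try exact interval subdivision of $[-1,1]$ with rational Taylor bounds, falling back on the Sturm-sequence argument.

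\textbf{Failure for $k=6$.} It suffices to exhibit one point $x_*\in[-1,1]$, say a simple rational, with $T(x_*;\boldsymbol{a}^{(6)})<0$, computed exactly in rational arithmetic from the coefficients in \cref{tab:BDF-coefficients2}. Then $\min T\le T(x_*)<0$, so no $\alpha>0$ satisfies \eqref{eq:lr:es2}. By \cref{thm:lr:equi}, no upper triangular PSD $G_a$ satisfies \eqref{assumption: Ua and Ub} in that case either. I would locate $x_*$ by scanning the $(\rho,\phi)$ formula above, where the added term $\rho^5\cos(5\phi)/6$ becomes negative enough to overturn the BDF5 minimum. Then I would round to a nearby rational and verify the sign exactly.
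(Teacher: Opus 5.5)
Your proposal is correct and follows the same route as the paper. Both note $T(x;\boldsymbol{b}^{(k)})\equiv \tfrac12$, check positivity of $T(x;\boldsymbol{a}^{(k)})$ directly for $k\le 5$ (the paper only tabulates the polynomials and their minima, without the certification you plan to supply), and exhibit one negative point value for $k=6$. Your formula $M=\sum_{j=1}^{k}\rho^{j-1}\cos((j-1)\phi)/j$ gives the witness without any scanning: $x_*=0$ (i.e.\ $\phi=\pi/4$, $\rho=\sqrt2$) yields $T(0;\boldsymbol{a}^{(6)})=1+\tfrac12+0-\tfrac12-\tfrac45-\tfrac23=-\tfrac{7}{15}$, which is exactly the paper's choice.
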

\begin{proof}
Since $\boldsymbol{b}^{(k)} = [1/2, 0, \dots, 0]^\top \in \mathbb{R}^{k}$ is fixed, the associated real-valued generating polynomial $T(x; \boldsymbol{b}^{(k)}) \equiv 1/2$ trivially satisfies the second inequality in \eqref{eq:lr:es2}. 
Therefore, it suffices to verify the positivity of $T(x;\boldsymbol{a}^{(k)})$.
In the case of IMEX-BDF$1$ method, $T(x; \boldsymbol{a}^{(1)}) \equiv 1 > 0$ naturally holds. 
For $2 \le k \le 5$, the corresponding generating polynomials
$T(x;\boldsymbol{a}^{(k)})$, together with their minimum values
$\min_{x\in[-1,1]} T(x;\boldsymbol{a}^{(k)})$, are listed in
\cref{tab:BDF-poly}.
For intuition, the graphs of $T(x;\boldsymbol{a}^{(k)})$
for $2 \le k \le 5$ are shown in \cref{fig:bdf2-5}.
For brevity, we omit the details of the computation of
$\min_{x\in[-1,1]} T(x;\boldsymbol{s})$.
For IMEX-BDF6 method, we evaluate the real-valued generating polynomial $T(x; \boldsymbol{a}^{(6)})$ at a specific point $x_0 = 0$, then
\[
\begin{aligned}
T(x_0; \boldsymbol{a}^{(6)}) & = \frac{49}{20} T_0(x_0) -\frac{71}{20} T_1(x_0) + \frac{79}{20} T_2(x_0) - \frac{163}{60} T_3(x_0) + \frac{31}{30} T_4(x_0) - \frac{1}{6} T_5(x_0)\\
& = \frac{49}{20} -  \frac{79}{20} + \frac{31}{30} = -\frac{7}{15}<0,
\end{aligned}
\]
which violates \eqref{eq:lr:es2}.
\end{proof}

\begin{table}[htbp]
\centering
\begin{tabular}{c|ccc}
    \toprule
    $k$ & Real-valued generating polynomial $T(x; \boldsymbol{a}^{(k)})$ & Minimum value \\
    \midrule
    2 & $\frac{3}{2}  - \frac{1}{2} x$ & $1$ \\[4pt]
    3 & $\frac{3}{2} -\frac{7}{6}x + \frac{2}{3} x^2$ & $\frac{95}{96}  \approx 0.989583$ \\[4pt]
    4 & $1 -\frac{7}{6}x + \frac{13}{6} x^2-x^3$ &  $\frac{664}{729} - \frac{43\sqrt{43}}{2916}\approx 0.814139$ \\[4pt]
    5 & $\frac{1}{5} +\frac{13}{30}x + \frac{89}{30} x^2-\frac{21}{5}x^3 + \frac{8}{5} x^4$ & $ \approx  0.185546$ \\
    \bottomrule
\end{tabular}
\caption{Real-valued generating polynomials for IMEX-BDF$k$ ($2\leq k \leq 5$) methods.}
\label{tab:BDF-poly}
\end{table}


\begin{figure}[t]
    \centering
    \begin{subfigure}[b]{0.45\textwidth}
        \centering
        \includegraphics[width=\linewidth]{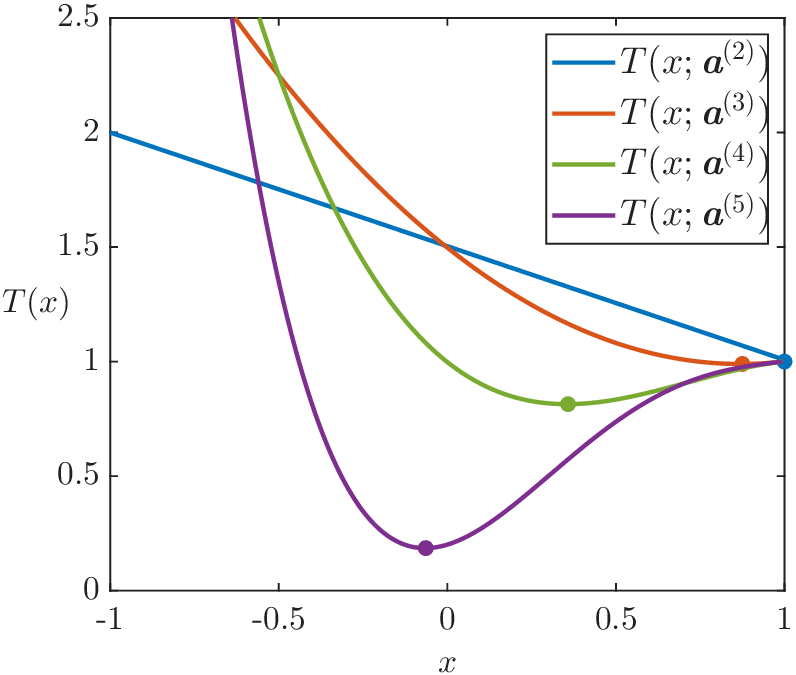}
        \caption{IMEX-BDF methods of orders 2–5.}
        \label{fig:bdf2-5}
    \end{subfigure}%
    \quad
    \begin{subfigure}[b]{0.45\textwidth}
        \centering
        \includegraphics[width=\linewidth]{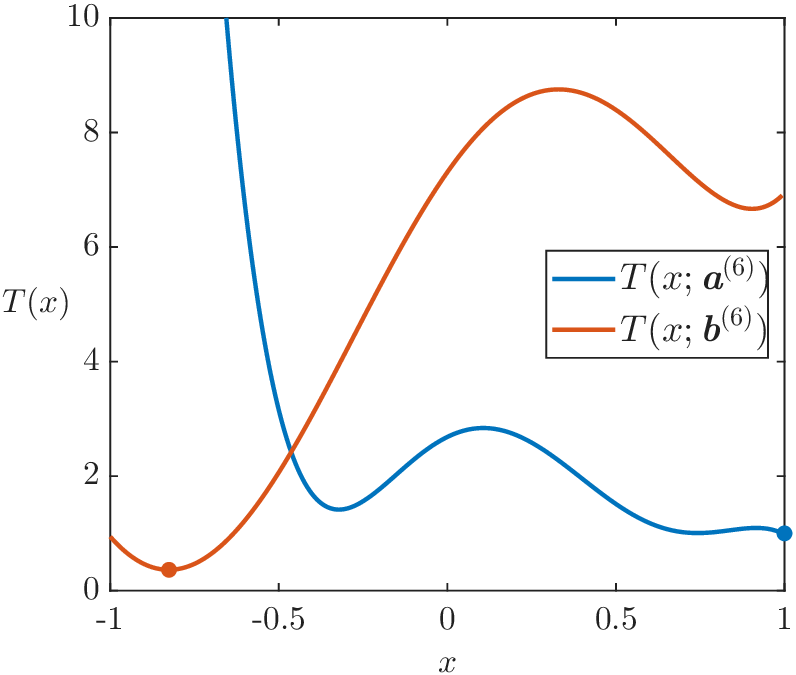}
        \caption{IMEX-LMM of order 6.}
        \label{fig:lmm6}
    \end{subfigure}
    \caption{The real-valued generating polynomials $T(x;\boldsymbol{a}^{(k)})$ associated with the IMEX-BDF methods of orders $k=2,3,4,$ and $5$ are shown on the left, where the corresponding coefficient vectors $\boldsymbol{a}^{(k)}$ are listed in \cref{tab:BDF-coefficients2}. The polynomials $T(x;\boldsymbol{a}^{(6)})$ and $T(x;\boldsymbol{b}^{(6)})$ associated with the IMEX-LMM of order $6$ are displayed on the right, with coefficients given in \cref{tab:lmm6}. For each method, the minimum is marked by a solid dot.}
    \label{fig:gen-polys}
\end{figure}

We now show the computation of the parameters $\alpha$, $\beta$, $G_a$, and $G_b$ in \cref{thm:main-result} for IMEX-BDF$k$ methods with $2\leq k \leq 5$ by following the procedure in \cref{subsec:construct-g&G}.  
Let $\alpha_{\max}$ and $\beta_{\max}$ denote the maximum values of $\alpha$ and $\beta$ satisfying conditions in \eqref{eq:lr:es2}. 
The coefficient vector $\boldsymbol{b}^{(k)} = [1/2,0,\ldots,0]^\top$ 
implies that $T(x; \boldsymbol{b}^{(k)}) \equiv 1/2$, immediately yielding $\beta_{\max} = 1/2$. Consequently, the associated matrix  $G_b=\boldsymbol{0}\in\mathbb{R}^{(k-1)\times(k-1)}$. The value of
$\alpha_{\max}$ is the minimum of $T(x;\boldsymbol{a}^{(k)})$ on $[-1,1]$, which is obtained from  
{\cref{tab:BDF-poly}.}
We provide closed-form expressions for $G_a$ in the cases $k=2$ and $3$, and use numerical approximations in the cases  $k=4$ and $5$. The calculated $\alpha_{\max}$ and the associated matrix $G_a$ are summarized in \cref{tab:bdf-g}.

\begin{table}[htbp]
\centering
\begin{tabular}{c|cc}
\toprule
$k$ &  $G_a$ & $\alpha_{\max}$ \\[4pt] 
\midrule

$2$ & $\frac{1}{4}$ & $1$ \\[4pt]
$3$ &
$\begin{bmatrix}
\frac{65}{96} & -\frac{7}{12} \\
0 & \frac{1}{6}
\end{bmatrix}$ 
& $\frac{95}{96}$ \\[4pt]
$4$ & 
$\begin{bmatrix}
1.219233 & -1.595139 & 0.804452 \\
 & 0.701926 & -0.697752 \\
 & & 0.312746
\end{bmatrix}$
& 0.814139 \\[4pt]
$5$ & 
$\begin{bmatrix}
2.084535 & -2.591196 & 2.073333 & -0.946751 \\
& 1.787561 & -1.597106 & 1.584577 \\
& & 0.955657 & -0.779076\\
& & & 0.754560
\end{bmatrix}$ & 0.185546\\
\bottomrule
\end{tabular}
\caption{$\alpha_{\max}$ and the associated  $G_a$ for IMEX-BDF$k$ methods.}
\label{tab:bdf-g}
\end{table}

\begin{remark}
\cref{lem:bdf2-6} establishes the energy-dissipative property of IMEX-BDF$k$ methods with $1\leq k \leq 5$. Comparing to the energy analysis of IMEX-BDF methods in \cite{li2021f} that solves the quadratic system \eqref{eq: vector p system rank1} case-by-case for different values of $k$, this work provides a unified framework based on checking the positivity of real-valued generating polynomials and moreover, proves rigorously the equivalence theory in Proposition \ref{thm:lr:equi}. In addition, \cite[Appendix A]{LiaoKang-2024IMA-PFC} proposed an alternative strategy to construct the PSD matrix $G_a$ for BDF3 method with optimal $\alpha = \tfrac{95}{96} = \alpha_{\max}$, while it gives $\alpha = \tfrac{4919}{6144}\approx0.801 < \alpha_{\max}$ for BDF4 method and $\alpha = \tfrac{646631}{3840000} \approx 0.168 < \alpha_{\max}$ for BDF5 method, with $\alpha_{\max}$ given in Table \ref{tab:bdf-g}.
\end{remark}

\subsection{General approach to construct energy-dissipative IMEX-LMMs}

\Cref{lem:bdf2-6} shows that the IMEX-BDF6 method does not satisfy the positivity conditions under the simple multiplier used in \cref{thm:main-result}.
This failure motivates the construction of general energy-dissipative IMEX-LMMs with high-order accuracy. 

To construct a $k$th-order energy-dissipative IMEX-LMM, the order conditions in \eqref{constraint: order conditions of LMM}--\eqref{condition: normalization} are required first and then the conditions in \eqref{eq:lr:es2} shall be imposed for energy dissipation according to the framework of \cref{thm:main-result}. 

To represent the coefficients of IMEX-LMMs subject to the order conditions in \eqref{constraint: order conditions of LMM} and \eqref{condition: normalization}, we introduce a set of parameters $w_0,\ldots,w_{k} \in \mathbb{R}$ so that the coefficients $\{A_i^{(k)}\}_{i=0}^k$, $\{B_i^{(k)}\}_{i=0}^k$, and $\{\hat{B}_i^{(k)}\}_{i=1}^k$ in \eqref{constraint: order conditions of LMM}--\eqref{condition: normalization}  satisfy
\[
\sum_{i=0}^k A_i^{(k)}(-i)^{m+1}  =  (m+1) \sum_{i=0}^k B_i^{(k)}(-i)^m = (m+1) \sum_{i=1}^k \hat{B}_i^{(k)}(-i)^m = w_m
\]
for $m=0,\ldots,k-1$ and $\sum_{i=0}^k A_i^{(k)} = 0$. Note that $w_0 = 1$ due to the normalization condition \eqref{condition: normalization} and $w_1,\ldots,w_{k-1}$  are free parameters. Further, taking $w_k = B_k^{(k)}$ as an additional free parameter, the above equations can be written in matrix form of
\begin{equation}
\label{eq:oc-matform}
 W_1
 \begin{bmatrix}
 A_0^{(k)}\\
 \vdots \\
 A_k^{(k)}
 \end{bmatrix}
 =
 \begin{bmatrix}
 0 \\
 \tilde{\boldsymbol{w}}
 \end{bmatrix},
 \quad 
 W_2 
 \begin{bmatrix}
  B_0^{(k)}\\
 \vdots \\
 B_{k-1}^{(k)}
 \end{bmatrix}
 = D_k \tilde{\boldsymbol{w}} - w_k \boldsymbol{z},
 \quad 
 W_3 
  \begin{bmatrix}
  \hat{B}_1^{(k)}\\
 \vdots \\
 \hat{B}_{k}^{(k)}
 \end{bmatrix}
 = D_k \tilde{\boldsymbol{w}},
 \end{equation}
where
\[
\begin{aligned}
\boldsymbol{w} & = [w_1,\ldots,w_{k-1},w_k]^\top,\quad\tilde{\boldsymbol{w}} = [w_0,w_1,\ldots,w_{k-1}]^\top, \\
D_k & = \operatorname{diag}\braB{1,\frac{1}{2},\ldots,\frac{1}{k}}, \quad  \boldsymbol{z} = \kbrab{1, -k,\ldots,(-k)^{k-1}}^\top,
\end{aligned}
\]
and $W_1, W_2, W_3$ are the transposed Vandermonde matrices:
\[
W_1 = W(0,-1,\ldots,-k)^\top, \quad W_2 = W(0,-1,\ldots,-k+1)^\top,\quad W_3 = W(-1,\ldots,-k)^\top.
\]
Here,
$W(x_0, \dots, x_n) \in \mathbb{R}^{(n+1)\times(n+1)}$ is the Vandermonde matrix generated by $x_0, \dots, x_n$, i.e., $W_{i+1,j+1} = x_i^j$ for $0\le i,j \le n$. 
Substituting \eqref{eq:oc-matform} into \eqref{def: coefficients a b c} yields the explicit formulas of the coefficient vectors:
\begin{equation}
\label{eq:hom:ab}
\begin{gathered}
\boldsymbol{a}^{(k)} = [E_k, \boldsymbol{0}] W_1^{-1}
 \begin{bmatrix}
0 \\
\tilde{\boldsymbol{w}}
\end{bmatrix},
\\
\boldsymbol{b}^{(k)} = E_k W_2^{-1} (D_k \tilde{\boldsymbol{w}} - w_k \boldsymbol{z}) - 
\begin{bmatrix}
{1}/{2} \\
1 \\
\vdots\\
1
\end{bmatrix},
\quad
\hat{\boldsymbol{b}}^{(k)} = 
E_k W_3^{-1} D_k \tilde{\boldsymbol{w}} - 
\begin{bmatrix}
1 \\
1 \\
\vdots\\
1
\end{bmatrix},
\end{gathered}
\end{equation}
where $\boldsymbol{a}^{(k)}$, $\boldsymbol{b}^{(k)}$, and  $\hat{\boldsymbol{b}}^{(k)}$ are defined in \eqref{eq:coef-vecs}, and $E_k\in \mathbb{R}^{k\times k}$ denotes the lower triangular matrix of ones, i.e., $(E_k)_{ij} = 1$ if $i \ge j$, and $(E_k)_{ij} =0$ otherwise.

Then, the construction of a $k$th-order energy-dissipative IMEX-LMM reduces to the following feasibility problem:

\begin{enumerate}[label=\textnormal{(FP1)}, ref=FP1]
    \item \label{FP1} 
    Find $w_1, \dots, w_{k-1}, w_k \in \mathbb{R}$ such that:
    \begin{equation}
    \label{eq:hom:obj}
    \min_{x\in[-1,1]} T(x;\boldsymbol{a}^{(k)}) >0\quad \text{and}\ \min_{x\in[-1,1]} T(x; \boldsymbol{b}^{(k)}) > 0,
    \end{equation}
where $\boldsymbol{a}^{(k)}$ and $\boldsymbol{b}^{(k)}$ are determined by \eqref{eq:hom:ab}.
\end{enumerate}
Notably, $\hat{\boldsymbol{b}}^{(k)}$ is uniquely determined by $\tilde{\boldsymbol{w}}$ and does not enter into the problem \eqref{FP1}.

To solve \eqref{FP1}, one must accurately compute the minima of $T(x;\boldsymbol{a}^{(k)})$ and $T(x;\boldsymbol{b}^{(k)})$ on $[-1,1]$. In the remainder of this subsection, we present an efficient numerical method for this task based on the properties of Chebyshev polynomials. For brevity, let $T(x) = T(x;\boldsymbol{s})$, where $\boldsymbol{s} = [s_0, \dots, s_{k-1}]^\top$ corresponds to either $\boldsymbol{a}^{(k)}$ or $\boldsymbol{b}^{(k)}$. 
By \eqref{eq:coef-vecs}, $T(x)$ has the Chebyshev expansion $T(x) = \sum_{m=0}^{k-1} s_m T_m(x)$, and its derivative can be expressed in the same basis:
\[
T^\prime (x) = \sum_{m=0}^{k-1} s_m T_m^\prime (x) = \sum_{m=0}^{k-1} \hat{s}_m T_m (x),
\]
where the coefficients $\hat{s}_m$ are determined via the backward recurrence relations \cite[Eq. (3.234)]{shen2011spectral}:
\[
\left\{
\begin{aligned}
&\hat{s}_{k-1} = 0,\ \hat{s}_{k-2} = 2 (k-1) {s}_{k-1}, \\
& \hat{s}_{m-1} = (2 m {s}_m + \hat{s}_{m+1}) / (1+\delta_{m,1}),\quad m=k-2,\ldots,1.
\end{aligned}
\right.
\]
The roots of $T^\prime(x)$ are equivalent to the eigenvalues of the colleague matrix $C_k$ constructed from the coefficients $\hat{s}_m$. This matrix is tridiagonal except for its last row and takes the form \cite[Theorem 18.1]{trefethen2019approximation}:
\[
C_k = \begin{bmatrix}
0 & 1 & & & & \\
\frac{1}{2} & 0 & \frac{1}{2} & & & \\
& \frac{1}{2} & 0 & \frac{1}{2} & & \\
& & \ddots & \ddots & \ddots & \\
& & & & & \frac{1}{2} \\
& & & & \frac{1}{2} & 0
\end{bmatrix}
-\frac{1}{2 \hat{s}_{k-2}}
\begin{bmatrix} 
& & & & \\
& & & & \\
& & & & \\
& & & & \\
\\
\hat{s}_0 & \hat{s}_1 &  & \ldots & \hat{s}_{k-3}
\end{bmatrix}.
 \]
 This eigenvalue-based approach
 provides a robust framework for numerical root finding, as implemented in the Chebfun package \cite{driscoll2014chebfun}. 

Consequently, the eigenvalues of $C_k$ provide a set of candidate local extrema, denoted by $\{x_j^*\}_{j=1}^{k-2}$.  After discarding repeated roots and those outside $[-1,1]$, we augment this set with the endpoints $x_0^* = -1$ and $x_{k-1}^* = 1$. The global minimum is then given by
 \begin{equation}
 \label{eq:disc-min}
 \min_{x\in[-1,1]} T(x) = \min_{0\leq j\leq k-1} T(x_j^*(\boldsymbol{s})),
 \end{equation}
where for notational convenience, we retain the index range $0\leq j\leq k-1$ and $x_j^*(\boldsymbol{s})$ emphasizes the nonlinear dependence of the critical points on $\boldsymbol{s}$.

\subsection{A sixth-order energy-dissipative IMEX-LMM}

Solving the feasibility problem \eqref{FP1} for $k=6$ yields a family of sixth-order energy-dissipative IMEX-LMMs. We provide a representative instance defined by the parameters:
\[
w_0 = 1, \ w_1 = \frac{64}{5},\ w_2 = -\frac{141}{5}, \ w_3 = 111, \ w_4 = -1034, \ w_5 = 9886, \  w_6 =  -\frac{23}{100}.
\]
The resulting sixth-order IMEX-LMM is specified in \cref{tab:lmm6}.

\begin{table}[htbp]
    \centering
    \setlength{\extrarowheight}{9pt}
    \begin{tabular}{c|ccccccc}
    \toprule
        $i$ & $0$ & $1$ & $2$ & $3$ & $4$ & $5$ & $6$ \\
    \midrule
       $A_i^{(k)}$  & $\dfrac{2617}{200}$ & $-\dfrac{6897}{200}$  & $\dfrac{4481}{120}$ & $-\dfrac{319}{12}$ & $\dfrac{647}{40}$ & $-\dfrac{4231}{600}$ & $\dfrac{911}{600}$ \\
       
       $B_i^{(k)}$ & $\dfrac{1525}{288}$ & $-\dfrac{2999}{7200}$ & $-\dfrac{4001}{720}$ & $\dfrac{79}{144}$ & $\dfrac{557}{288}$ &  $-\dfrac{827}{1440}$  &  $-\dfrac{23}{100}$ \\ 

       $\hat{B}_i^{(k)}$ &  & $\dfrac{225751}{7200}$ & $-\dfrac{122377}{1440}$ & $\dfrac{15329}{144}$ & $-\dfrac{11159}{144}$ & $\dfrac{44923}{1440}$ & $-\dfrac{39781}{7200}$ \\
       
       $a_i^{(k)}$ & $\dfrac{2617}{200}$  &  $-\dfrac{107}{5}$  &  $\dfrac{1913}{120}$ & $-\dfrac{1277}{120}$ &  $\dfrac{83}{15}$   &  $-\dfrac{911}{600}$  & \\

       $b_i^{(k)}$ & $\dfrac{1381}{288}$ & $\dfrac{13963}{3600}$ & $-\dfrac{1007}{600}$ & $-\dfrac{4067}{3600}$  & $\dfrac{5791}{7200}$  &  $\dfrac{23}{100}$ &  \\

       $\hat{b}_i^{(k)}$ &  &  $\dfrac{218551}{7200}$ & $-\dfrac{196667}{3600}$ & $\dfrac{31093}{600}$ & $-\dfrac{92417}{3600}$ & $\dfrac{39781}{7200}$ & \\[5pt]
    \bottomrule
    \end{tabular}
    \caption{A sixth-order energy-dissipative IMEX-LMM.}
    \label{tab:lmm6}
\end{table}

A direct verification confirms that the scheme listed in \cref{tab:lmm6} satisfies the order conditions in \eqref{constraint: order conditions of LMM} and  \eqref{condition: normalization}.
Furthermore, using \eqref{def: coefficients a b c}, we derive the coefficient vectors $\boldsymbol{a}^{(k)}$, $\boldsymbol{b}^{(k)}$, and $\hat{\boldsymbol{b}}^{(k)}$ as in \eqref{eq:coef-vecs}, whose components are also provided in \cref{tab:lmm6}, and verify that the constraints in \eqref{eq:hom:obj} hold.  
This verification is further illustrated by the plots of $T(x;\boldsymbol{a}^{(k)})$ and $T(x;\boldsymbol{b}^{(k)})$ shown in \cref{fig:lmm6}.

We now address the computation of the parameters $\alpha$, $\beta$, $G_a$, and $G_b$ in \cref{thm:main-result} for  this sixth-order IMEX-LMM by following the procedure in \cref{subsec:construct-g&G}. Let $\alpha_{\max}$ and $\beta_{\max}$ denote the maximum values of $\alpha$ and $\beta$ allowed by the conditions in \eqref{eq:lr:es2}. Then, 
\[
\alpha_{\max} = \min_{x\in [-1,1]} T(x;\boldsymbol{a}^{(k)}) = 1,\quad 
\beta_{\max} = \min_{x\in[-1,1]} T(x;\boldsymbol{b}^{(k)}) \approx 0.363757.
\]
The associated $G_a$ and $G_b$ are 
\[
G_a = 
\begin{bmatrix}
    11.525734 & -19.376783 & 13.720328  & -8.056395 & 2.746382 \\
    & 9.695922 & -15.358795 & 9.044053 & -3.015320 \\
    & & 7.490199 & -10.224600 & 3.509334 \\
    & & & 4.502521 & -3.783101 \\
    & & & & 1.030518
\end{bmatrix},
\]
and
\[
G_b = 
\begin{bmatrix}
    4.424381 & 3.844372 & -1.572744 & -1.143033 & 0.562442 \\
    & 4.382517 & 4.102580 & -1.605295 & -1.734487 \\
    & & 3.984379 & 4.202963 & 0.218659 \\
    & & & 3.978051 & 3.973025 \\
    & & & & 1.889072
\end{bmatrix}.
\]
The minimum eigenvalue of $G_a+G_a^\top$ is approximately $0.078211$, while the minimum eigenvalue of $G_b+G_b^\top$ is approximately $0.406943$.

We next examine the classical linear stability of the sixth-order IMEX-LMM listed in \cref{tab:lmm6} and compare it with the IMEX-BDF6 method. 
For a
given IMEX-LMM, let
\[
\rho(\xi)=\sum_{i=0}^{6}A_i^{(6)}\xi^{6-i},\qquad
\sigma(\xi)=\sum_{i=0}^{6}B_i^{(6)}\xi^{6-i},\qquad
\hat\sigma(\xi)=\sum_{i=1}^{6}\hat B_i^{(6)}\xi^{6-i}.
\]
Here $\rho$ is the first characteristic polynomial associated with the
difference operator. For the new sixth-order IMEX-LMM in \cref{tab:lmm6}, a
direct computation shows that $\rho$ satisfies the root condition \cite{Dahlquist1956-0stab}, namely,
all its roots lie in the closed unit disk and all roots on the unit circle are
simple. Hence the method is zero-stable.

To illustrate the IMEX stability behavior, we apply the method to the split scalar test equation as in \cite{Frank-1997ANM-stabEq}
\[
    y'=\lambda_I y+\lambda_E y,\quad
    z_I=\tau\lambda_I,\quad z_E=\tau\lambda_E,
\]
where the $\lambda_I$-term is treated implicitly and the $\lambda_E$-term
explicitly. This gives the characteristic equation
\begin{equation}
\label{eq:char-eq}
    \rho(\xi)-z_I\sigma(\xi)-z_E\hat{\sigma}(\xi)=0.
\end{equation}
The IMEX stability region is defined as the set of all
$(z_I,z_E)\in\mathbb C^2$ for which \eqref{eq:char-eq} satisfies the root
condition. Since this region is four-dimensional over $\mathbb R$, we display
representative two-dimensional slices.

For comparison, we first plot the corresponding slices for IMEX-BDF6. In this
case,
\[
\begin{aligned}
\rho(\xi)
&= \frac{49}{20}\xi^6 - 6\xi^5 + \frac{15}{2}\xi^4
   - \frac{20}{3}\xi^3 + \frac{15}{4}\xi^2
   - \frac{6}{5}\xi + \frac{1}{6},\\
\sigma(\xi)&=\xi^6,\\
\hat{\sigma}(\xi)
&=6\xi^5-15\xi^4+20\xi^3-15\xi^2+6\xi-1.
\end{aligned}
\]
The purely implicit slice $z_E=0$, the purely explicit slice $z_I=0$, and
the $z_E$-plane slices for fixed $z_I=0,-1,-5,-10$ are shown in \cref{subfig:bdf6-implicit,subfig:bdf6-explicit,subfig:bdf6-imex}. In
\cref{subfig:bdf6-implicit,subfig:bdf6-explicit}, the stability regions are shaded in light
red. In \cref{subfig:bdf6-imex}, the stability region for each fixed
$z_I$ lies inside the corresponding contour.
The same stability region slices for the new sixth-order IMEX-LMM in
\cref{tab:lmm6} are displayed in \cref{subfig:lmm6-implicit,subfig:lmm6-explicit,subfig:lmm6-imex}. More
precisely, \cref{subfig:lmm6-implicit} shows the purely implicit slice
$z_E=0$, \cref{subfig:lmm6-explicit} shows the purely explicit slice
$z_I=0$, and \cref{subfig:lmm6-imex} shows the $z_E$-plane slices for
fixed $z_I=0,-1,-5,-10$. In
\cref{subfig:lmm6-implicit,subfig:lmm6-explicit}, the stability regions are shaded in light blue,
while in \cref{subfig:lmm6-imex} the stability region for each fixed
$z_I$ lies inside the corresponding contour.

From \cref{fig:lmm6-stability}, we observe that the purely explicit stability
slices and the fixed-$z_I$ IMEX slices of IMEX-BDF6 in
\cref{subfig:bdf6-explicit,subfig:bdf6-imex} are comparable to those of the new IMEX-LMM6 in
\cref{subfig:lmm6-explicit,subfig:lmm6-imex}. The main difference appears in the purely
implicit slices shown in \cref{subfig:bdf6-implicit,subfig:lmm6-implicit}. Within the plotted
window, IMEX-BDF6 has a larger implicit stability region in terms of area,
whereas the new IMEX-LMM6 contains a wider sector around the negative real axis.
Thus, the new IMEX-LMM6 has a larger $A(\vartheta)$-stability angle,
where $A(\vartheta)$-stability means that the stability region contains
the sector $\{z=-r \mathrm{e}^{\mathrm{i}\phi} \in \mathbb{C}:\ r\geq 0,\ |\phi|\leq \vartheta\}$. 
A direct computation of the largest admissible sector angle gives
$\vartheta_{\rm BDF6}\approx 17.84^\circ$ and
$\vartheta_{\rm LMM6}\approx 26.15^\circ$.

\begin{figure}[htbp]
    \centering
    \begin{subfigure}{0.32\textwidth}
        \centering
        \includegraphics[width=\textwidth]{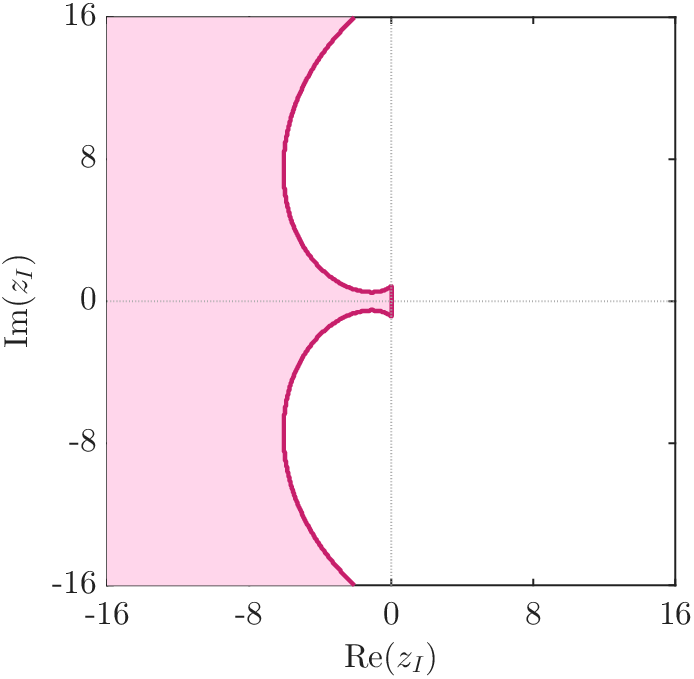}
        \caption{$z_I$-plane, $z_E=0$}
        \label{subfig:bdf6-implicit}
    \end{subfigure}
    \begin{subfigure}{0.32\textwidth}
        \centering
        \includegraphics[width=\textwidth]{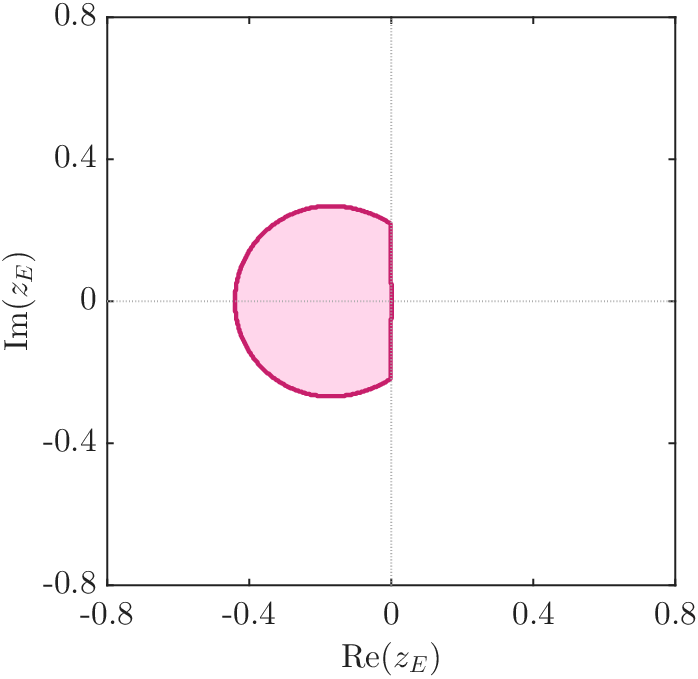}
        \caption{$z_E$-plane, $z_I=0$}
        \label{subfig:bdf6-explicit}
    \end{subfigure}
    \begin{subfigure}{0.32\textwidth}
        \centering
        \includegraphics[width=\textwidth]{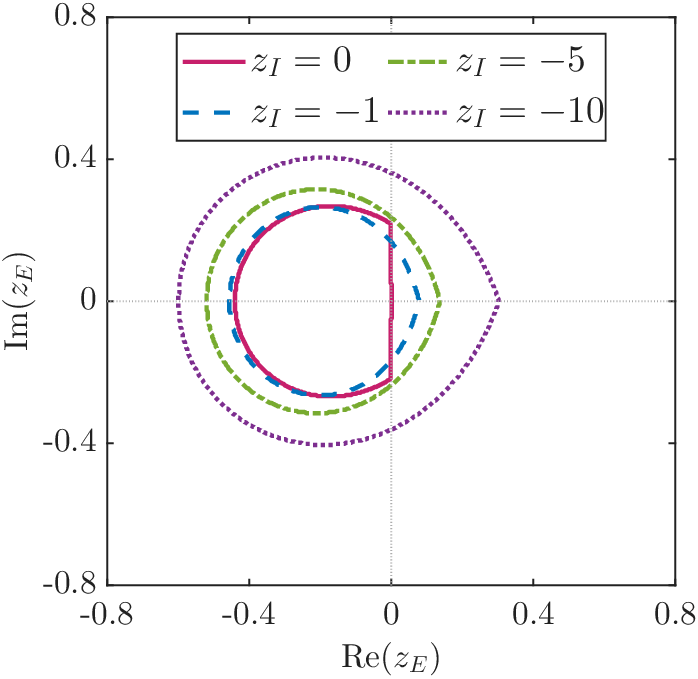}
        \caption{$z_E$-plane, fixed $z_I$}
        \label{subfig:bdf6-imex}
    \end{subfigure}\\
    \begin{subfigure}{0.32\textwidth}
        \centering
        \includegraphics[width=\textwidth]{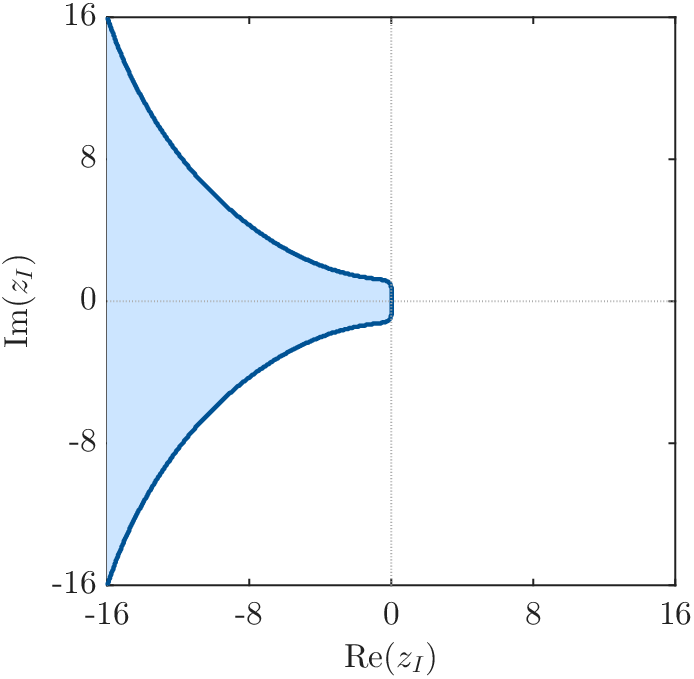}
        \caption{$z_I$-plane, $z_E=0$}
        \label{subfig:lmm6-implicit}
    \end{subfigure}
    \begin{subfigure}{0.32\textwidth}
        \centering
        \includegraphics[width=\textwidth]{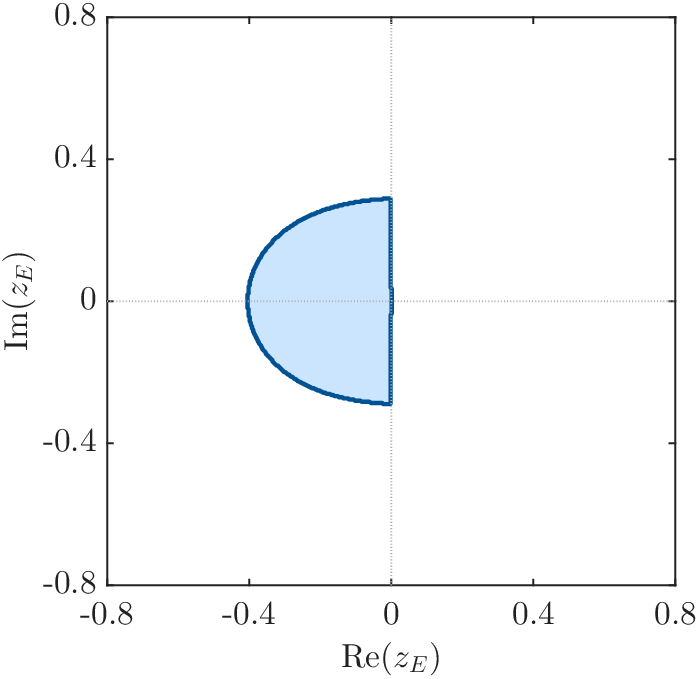}
        \caption{$z_E$-plane, $z_I=0$}
        \label{subfig:lmm6-explicit}
    \end{subfigure}
    \begin{subfigure}{0.32\textwidth}
        \centering
        \includegraphics[width=\textwidth]{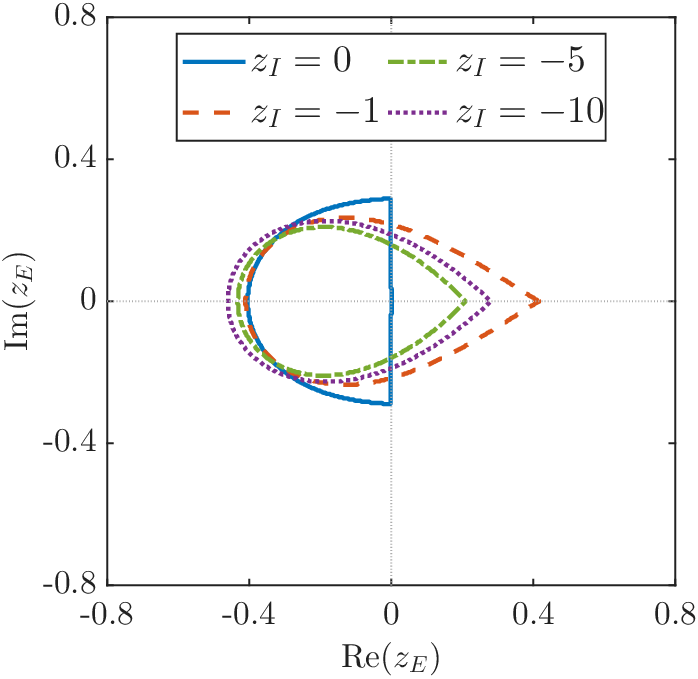}
        \caption{$z_E$-plane, fixed $z_I$}
        \label{subfig:lmm6-imex}
    \end{subfigure}
    \caption{Linear stability regions of IMEX-BDF6 (top row) and the proposed sixth-order IMEX-LMM listed in \cref{tab:lmm6} (bottom row): (a),(d) implicit stability regions with $z_E=0$; (b),(e) explicit stability regions with $z_I=0$; (c),(f) IMEX stability slices in the $z_E$-plane for fixed $z_I=0,-1,-5,-10$.}
\label{fig:lmm6-stability}
\end{figure}

\subsection{A sixth-order barrier}

While a family of sixth-order IMEX-LMMs exists, we will prove rigorously that no seventh-order IMEX-LMM satisfies the conditions in \eqref{eq:lr:es2}. That is, the feasibility problem \eqref{FP1} has no solution for $k=7$.

\begin{theorem}
There exists no seventh-order IMEX-LMM that satisfies the conditions in \eqref{eq:lr:es2}.
\end{theorem}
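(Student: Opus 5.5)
The plan is to turn the feasibility problem \eqref{FP1} with $k=7$ into a system of infinitely many linear inequalities in the free parameters, and then exhibit an explicit Farkas-type certificate of infeasibility. By \eqref{eq:hom:ab}, the vector $\boldsymbol{a}^{(7)}$ is an affine function of $(w_1,\dots,w_6)$ with $w_0=1$; it does not depend on $w_7$. Similarly, $\boldsymbol{b}^{(7)}$ is affine in $(w_1,\dots,w_7)$. Since $T(x;\boldsymbol{s})=\sum_m s_m T_m(x)$ is linear in $\boldsymbol{s}$, for each fixed $x\in[-1,1]$ we can write
\[
T(x;\boldsymbol{a}^{(7)})=\phi_0(x)+\sum_{m=1}^{6} w_m\phi_m(x),\qquad T(x;\boldsymbol{b}^{(7)})=\psi_0(x)+\sum_{m=1}^{7} w_m\psi_m(x),
\]
with explicit polynomials $\phi_m,\psi_m$ of degree at most $6$ obtained from $W_1^{-1},W_2^{-1}$ and $E_7$. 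The conditions \eqref{eq:lr:es2} then say that a point $\boldsymbol{w}\in\mathbb{R}^7$ satisfies the strict linear inequalities $T(x;\boldsymbol{a}^{(7)})>0$ and $T(y;\boldsymbol{b}^{(7)})>0$ for all $x,y\in[-1,1]$.

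By Farkas' lemma (Gordan/Motzkin form for strict inequalities), infeasibility follows if we find finitely many nodes $x_1,\dots,x_J$ and $y_1,\dots,y_L$ in $[-1,1]$, and weights $\lambda_j,\mu_l\ge 0$ not all zero, such that
\[
\sum_j\lambda_j\phi_m(x_j)+\sum_l\mu_l\psi_m(y_l)=0 \quad (m=1,\dots,7;\ \phi_7\equiv0),\qquad \sum_j\lambda_j\phi_0(x_j)+\sum_l\mu_l\psi_0(y_l)\le 0 .
\]
Indeed, summing the assumed inequalities with these weights yields $0<\sum\lambda_j T(x_j;\boldsymbol{a}^{(7)})+\sum\mu_l T(y_l;\boldsymbol{b}^{(7)})\le0$, a contradiction. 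I would first try a certificate using only the $\boldsymbol{a}$-condition ($\mu=0$), since the analogous failure of BDF6 comes from $\boldsymbol{a}$. The $\boldsymbol{a}$-condition has six free parameters, so at most seven nodes should be needed by Carathéodory's theorem. If that fails, I would add $\boldsymbol{b}$-nodes to kill the $w_7$ dependence and the coupling through $w_1,\dots,w_6$.

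The concrete steps are as follows. (1) Compute $\phi_m,\psi_m$ symbolically from \eqref{eq:hom:ab}. (2) Solve the dual linear program numerically on a fine grid of $[-1,1]$: maximize $-\big(\sum\lambda_j\phi_0(x_j)+\sum\mu_l\psi_0(y_l)\big)$ subject to the annihilation equalities and the normalization $\sum\lambda_j+\sum\mu_l=1$. The support of the optimal measure identifies a few active nodes. (3) Replace those nodes by nearby simple rationals, re-solve the now square linear system for the weights exactly in rational arithmetic, and check that the weights are nonnegative and the resulting constant is $\le 0$. (4) Present the nodes, weights and the exact verification, which completes the proof.

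The main obstacle is step (2)–(3): actually finding a certificate that survives rationalization. The optimal dual measure may sit at irrational points, such as interior minima of the extremal generating polynomial. If the margin is small, perturbing the nodes could destroy nonnegativity of the weights or the sign of the constant. If this happens, I would keep more nodes than the minimum, which gives slack in the weights, or use Chebyshev-type nodes $\cos(j\pi/N)$ where the $T_m$ evaluate nicely. As a last resort, I would prove the sign of the final scalar inequality by an exact algebraic-number computation.
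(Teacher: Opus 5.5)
Your reduction is the one the paper uses. You evaluate both generating polynomials at finitely many nodes, get linear inequalities in $\boldsymbol w=[w_1,\ldots,w_7]^\top$, and rule them out with a nonnegative combination via Farkas' lemma. The implication you write out (certificate $\Rightarrow$ nonexistence) is correct, but it is the easy half. The theorem is exactly the claim that such a certificate exists for $k=7$, and nothing general forces this, since the same problem is feasible for $k=6$. Your proposal gives no nodes, no weights, and no argument that the dual LP has negative value. As it stands it is a search strategy, not a proof.

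The route you would try first cannot work. As $(w_1,\ldots,w_6)$ vary, $(A_0,\ldots,A_7)$ sweeps the affine set $\{\sum_i A_i=0,\ \sum_i A_i(-i)=1\}$. The partial-sum map sends this set bijectively onto the hyperplane $\{\sum_m a_m=1\}$. In particular $w_m=(-1)^m$, i.e. $A_0=1$, $A_1=-1$, gives $\boldsymbol a^{(7)}=\boldsymbol e_1$ and $T(x;\boldsymbol a^{(7)})\equiv 1$. So the $\boldsymbol a$-condition alone is feasible, and no certificate with $\mu=0$ exists. Likewise $\boldsymbol b^{(7)}$ alone ranges over all of $\mathbb{R}^7$; for example $\boldsymbol w=\boldsymbol 0$ gives $\boldsymbol b^{(7)}=\tfrac12\boldsymbol e_1$. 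The obstruction comes only from the coupling of $\boldsymbol a^{(7)}$ and $\boldsymbol b^{(7)}$ through the shared $w_1,\ldots,w_6$. The BDF6 analogy misleads you here: in BDF6, $\boldsymbol b$ is frozen, which pins every $w_m$.

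The paper supplies the missing certificate as follows:
\begin{itemize}
\item It uses the seven Chebyshev nodes $x_j=\cos(j\pi/6)$ for both polynomials, so all data lie in $\mathbb{Q}(\sqrt3)$.
\item The multiplier is supported on one $\boldsymbol a$-node, $x=-\tfrac12$, with weight $\tfrac{5}{27}(3-\sqrt3)$.
\item It uses three $\boldsymbol b$-nodes, $x=\tfrac{\sqrt3}{2},0,-\tfrac{\sqrt3}{2}$, with weights $2-\sqrt3$, $\tfrac18(3-\sqrt3)$, $1$.
\item These weights give $Q^\top\boldsymbol\lambda=\boldsymbol 0$ and $\boldsymbol q^\top\boldsymbol\lambda=-\tfrac{107}{336}(3-\sqrt3)<0$, so even nonnegativity at these nodes is infeasible.
\end{itemize}
This support is highly degenerate: four weights annihilate six nontrivial coefficient columns. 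It also sits at irrational nodes. Your step (3), rounding nodes to rationals and re-solving a square system, would therefore break the exact annihilation, and whether a nonnegative perturbed certificate survives is exactly what you leave unverified. The exact $\mathbb{Q}(\sqrt3)$ computation at $\cos(j\pi/6)$, which you list only as a last resort, is the natural way to finish.
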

\begin{proof}
The nonexistence is equivalent to the infeasibility of the problem \eqref{FP1}. Since $T(x;\boldsymbol{a}^{(k)})$ and $T(x;\boldsymbol{b}^{(k)})$ are polynomials of degree at most $k-1$, a necessary condition for satisfying \eqref{eq:hom:obj} is that both polynomials are nonnegative at a set of $k$ distinct points in $[-1,1]$. We choose the Chebyshev points:
\[
x_j = \cos \frac{j \pi}{k-1}, \quad j=0,\ldots,k-1.
\]
The non-negativity of the polynomials at these nodes yields the discrete inequalities
\begin{equation}
\label{eq:lmm7:1}
Z \boldsymbol{a}^{(k)} \geq \boldsymbol{0},\quad  Z \boldsymbol{b}^{(k)} \geq \boldsymbol{0},
\end{equation}
where the inequalities are understood elementwise, and
\[
Z = 
\begin{bmatrix}
T_0(x_0) & T_1(x_0) & \cdots & T_{k-1}(x_0) \\
T_0(x_1) & T_1(x_1) & \cdots & T_{k-1}(x_1) \\
\vdots & \vdots & & \vdots \\
T_0(x_{k-1}) & T_1(x_{k-1}) & \cdots & T_{k-1} (x_{k-1})
\end{bmatrix}.
\]
By the definition of the Chebyshev polynomials, the entries of $Z$ are explicitly given by $Z_{j+1,m+1} = T_m(x_j) = \cos ({m j \pi}/(k-1))$ for $0\leq j,m \leq k-1$. 
Substituting \eqref{eq:hom:ab} into \eqref{eq:lmm7:1} yields
\[
Z  [E_k, \myvec{0}] W_1^{-1}
 \begin{bmatrix}
0 \\
\tilde{\boldsymbol{w}}
\end{bmatrix} \geq \boldsymbol{0},
\quad
Z E_k W_2^{-1} (D_k \tilde{\boldsymbol{w}} - w_k \boldsymbol{z}) \geq Z
\begin{bmatrix}
{1}/{2} \\
1 \\
\vdots\\
1
\end{bmatrix}.
\]
These inequalities can be written as $Q \boldsymbol{w} \leq \boldsymbol{q}$, where $Q=[Q_1^\top,Q_2^\top]^\top$, $\boldsymbol{q} = [\boldsymbol{q}_1^\top, \boldsymbol{q}_2^\top]^\top$, 
\[
\begin{aligned}
& Q_1 = 
-Z [E_k, \myvec{0}_{k\times 1}] W_1^{-1}
\begin{bmatrix}
\boldsymbol{0}_{2\times (k-1)} & \boldsymbol{0}_{2\times 1} \\
I_{k-1} & \boldsymbol{0}_{(k-1)\times 1} 
\end{bmatrix},
 \quad \boldsymbol{q}_1 = 
 Z [E_k, \boldsymbol{0}_{k\times 1}]  W_1^{-1}
 \begin{bmatrix}
 0 \\
 \boldsymbol{e}_1 \\
 \end{bmatrix}, \\
 &  Q_2 = 
  - Z E_k W_2^{-1} 
 \begin{bmatrix}
D_k, -\boldsymbol{z}
 \end{bmatrix}
 \begin{bmatrix}
 \boldsymbol{0}_{1\times k} \\
 I_k
 \end{bmatrix},
 \quad \boldsymbol{q}_2 = 
    Z E_k W_2^{-1}  \boldsymbol{e}_1
  -
  Z
\begin{bmatrix}
{1}/{2} \\
1 \\
\vdots\\
1
\end{bmatrix},
  \end{aligned}
\]
$\boldsymbol{e}_1 = [1,0,\ldots,0]^\top \in \mathbb{R}^k$, and $I_k$ is the identity matrix of size $k$. 
Consequently, the problem  \eqref{FP1} is infeasible if the following problem is infeasible:
\begin{enumerate}[label=\textnormal{(FP2)}, ref=FP2]
    \item \label{FP2} 
    Find $w_1, \dots, w_{k-1}, w_k \in \mathbb{R}$ such that
    \begin{equation*}
    Q\boldsymbol{w} \leq \boldsymbol{q}.
    \end{equation*}
\end{enumerate}

By the theorem of the alternative (also known as Farkas' Lemma; see, e.g., \cite[Sec. 5.8]{boyd2004convex}), the infeasibility of \eqref{FP2} is equivalent to the existence of a vector $\boldsymbol{\lambda} \in \mathbb{R}^{2k}$ such that 
\begin{equation}
\label{eq:lmm7:2}
\boldsymbol{\lambda} \geq \boldsymbol{0},\quad 
Q^\top \boldsymbol{\lambda} = \boldsymbol{0},\quad 
\boldsymbol{q}^\top \boldsymbol{\lambda} < 0.
\end{equation}

We now show that such a vector $\boldsymbol{\lambda}$ indeed exists when $k=7$. To this end, we first give the explicit forms of $Q_1, Q_2, \boldsymbol{q}_1,$ and $\boldsymbol{q}_2$:
\renewcommand{\arraystretch}{1.9}
{
\everymath{\displaystyle}
\[
Q_1 = 
\begin{bmatrix}
0 & 0 & 0 & 0 & 0 & 0 & 0 \\
 \frac{c_1}{720} & \frac{7c_2}{96} & \frac{c_3}{288} & \frac{7c_4}{480} & \frac{c_4}{1440} & 0 & 0 \\
\frac{91}{720} & \frac{1}{1440} & -\frac{35}{288} & -\frac{59}{1440} & -\frac{7}{1440} & -\frac{1}{5040} & 0 \\
\frac{649}{180} & \frac{35}{12} & \frac{8}{9} & \frac{7}{60} & \frac{1}{180} & 0 & 0 \\
\frac{1197}{80} & \frac{2237}{160} & \frac{189}{32} & \frac{201}{160} & \frac{21}{160} & \frac{3}{560} & 0 \\
\frac{\bar{c}_1}{720} & \frac{7\bar{c}_2}{96} & \frac{\bar{c}_3}{288} & \frac{7\bar{c}_4}{480} & \frac{\bar{c}_4}{1440} & 0 & 0 \\
-\frac{2156}{45} & -\frac{1708}{45} & -\frac{133}{9} & -\frac{136}{45} & -\frac{14}{45} & -\frac{4}{315} & 0
\end{bmatrix},
\quad 
\boldsymbol{q}_1 = 
\begin{bmatrix} 
1 \\
-\frac{7 c_5}{120}\\
\frac{403}{420} \\
-\frac{7}{15} \\
-\frac{333}{70} \\
-\frac{7\bar{c}_5}{120} \\
\frac{2416}{105}
\end{bmatrix},
\]
}
and
{
\everymath{\displaystyle}
\[
Q_2 =
\begin{bmatrix}
-\frac{1}{2} & 0 & 0 & 0 & 0 & 0 & 0 \\
\frac{7 c_5 }{240} & \frac{c_1}{2160} & \frac{7 c_2}{384} & \frac{c_3}{1440} & \frac{7c_4}{2880} & \frac{c_4}{10080} & 0 \\
-\frac{49}{120} & \frac{343}{2160} & \frac{31}{384} & \frac{7}{1440} & -\frac{1}{960} & -\frac{1}{10080} & 1 \\
\frac{7}{30} & \frac{649}{540} & \frac{35}{48} & \frac{8}{45} & \frac{7}{360} & \frac{1}{1260} & 0 \\
\frac{9}{20} & \frac{147}{80} & \frac{169}{128} & \frac{63}{160} & \frac{17}{320} & \frac{3}{1120} & -27 \\
\frac{7 \bar{c}_5}{240} & \frac{\bar{c}_1}{2160} & \frac{7\bar{c}_2}{384} & \frac{\bar{c}_3}{1440} & \frac{7 \bar{c}_4}{2880} & \frac{\bar{c}_4}{10080} & 0 \\
-\frac{104}{15} & -\frac{1148}{135} & -\frac{13}{3} & -\frac{49}{45} & -\frac{2}{15} & -\frac{2}{315} & 64
\end{bmatrix},\quad 
\boldsymbol{q}_2 = 
\frac{1}{2}
\begin{bmatrix} 
1 \\
1\\
1\\
1\\
1\\
1\\
1
\end{bmatrix},
\]
}
where the constants are defined as 
\[
\begin{aligned}
 c_1 &= -3734+2183 \sqrt{3},\ c_2 = -41+24 \sqrt{3},\ c_3 = -233+134 \sqrt{3}, \
 c_4  = -7+4 \sqrt{3},\\
c_5 &= -67+29 \sqrt{3}, \  \bar{c}_1 = -3734-2183 \sqrt{3}, \
\bar{c}_2 = -41-24 \sqrt{3},\\
\bar{c}_3  &= -233-134 \sqrt{3},
\ \bar{c}_4 = -7-4 \sqrt{3},\
\bar{c}_5 = -67-29 \sqrt{3}.
\end{aligned}
\]
Subsequently, we compute a basis of the kernel of $Q^\top$ and construct $\boldsymbol{\lambda}$ explicitly. 
We choose three vectors from the basis:  $\boldsymbol{r}^{(\ell)}=[r_1^{(\ell)},\ldots,r_{14}^{(\ell)}]^\top \in \mathbb{R}^{14}$ ($\ell = 1,2,3$),  whose entries are all zero except for the following:
{
\everymath{\displaystyle}
\[
\begin{array}{llll}
r^{(1)}_3 = \frac{-7+6 \sqrt{3}}{20},
& r^{(1)}_5 = \frac{37+94 \sqrt{3}}{540}, 
& r^{(1)}_7 = \frac{-13+24 \sqrt{3}}{640},   
& r^{(1)}_{13} = 1, \\
r^{(2)}_3 = \frac{2}{5}, 
& r^{(2)}_5 = -\frac{62}{135}, 
& r^{(2)}_7 = -\frac{11}{80},
& r^{(2)}_{11} = 1, \\
r^{(3)}_3 = \frac{-7-6 \sqrt{3}}{20}, 
& r^{(3)}_5 = \frac{37-94 \sqrt{3}}{540},
& r^{(3)}_7 = \frac{-13-24 \sqrt{3}}{640}, 
& r^{(3)}_9 = 1.
\end{array}
\]
}
Then, $Q^\top \boldsymbol{r}^{(\ell)} = \boldsymbol{0}$ for $\ell=1,2,3$. Let  
\begin{equation}
\label{eq:lmm7:lambda}
\boldsymbol{\lambda} = \boldsymbol{r}^{(1)} + \frac{1}{8}(3-\sqrt{3}) \boldsymbol{r}^{(2)} + (2-\sqrt{3}) \boldsymbol{r}^{(3)}.
\end{equation}
The resulting vector $\boldsymbol{\lambda} = [\lambda_1,\ldots,\lambda_{14}]^\top$ has only four non-zero entries:
\[
\lambda_5 = \frac{5}{27}(3-\sqrt{3}),\quad
\lambda_9 = 2-\sqrt{3}, \quad 
\lambda_{11} = \frac{1}{8}(3-\sqrt{3}), \quad 
\lambda_{13} = 1.
\]
A direct calculation confirms that the vector $\boldsymbol{\lambda}$ defined in \eqref{eq:lmm7:lambda} satisfies all conditions in \eqref{eq:lmm7:2}. 
Therefore, the problem \eqref{FP2} is infeasible, which implies the infeasibility of \eqref{FP1} for $k=7$. 
This completes the proof.
\end{proof}



\section{Numerical experiments}\label{sec:Numerical experiments}

This section presents several numerical experiments to demonstrate the convergence and energy dissipation properties of the proposed sixth-order IMEX-LMM, whose coefficients are listed in \cref{tab:lmm6}.
For all simulations, the spatial operators are discretized using the Fourier pseudo-spectral approximation on a $512\times512$ uniform mesh, 
with Fourier transforms computed by MATLAB's built-in \texttt{fft2} and \texttt{ifft2} routines \cite{matlabfft}. 
To obtain the first six starting values, a sixth-order Gauss collocation RK method is employed, and the resulting nonlinear system is solved via fixed-point iteration with a tolerance of $10^{-14}$ (see, e.g., \cite{GongZhao-GaussRK6,LiaoKang-2024IMA-PFC}).


\subsection{Convergence test}

\begin{example}\label{ex:accuracy-AC}
    Consider the Allen--Cahn equation with a source term $g(t, x, y)$, $u_{t} = \varepsilon^{2} \Delta u + u - u^{3} + g(t, x, y)$, defined on the domain $(0, 2\pi)^{2}$ with interface width $\varepsilon = 0.01$. The source term $g$ and the initial value $u(0, x, y)$ are determined by the exact solution $u(t, x, y) = \cos(t) \sin(x) \sin(y)$.
\end{example}
In the following convergence tests, the source term $g$ is also treated explicitly.
We first verify the temporal convergence rate of the proposed sixth-order IMEX-LMM method by setting the final time $T=1$. \cref{tab:convergence} summarizes the $L^\infty$ and $L^2$ norm errors, defined as $e_\infty(\tau) \coloneqq \max_{6\leq n \leq N}\|u^{n}-u(t_n)\|_\infty$ and $e_2(\tau) \coloneqq \max_{6\leq n \leq N}\|u^{n}-u(t_n)\|$, respectively. The errors are computed using various time steps $\tau=T/N$, where $N$ is taken from $\{25, 40, 50, 64, 80\}$ so that $\tau$ has a terminating decimal representation.
The Allen--Cahn test shows high-order convergence with observed rates slightly below six for this particular solution.

\begin{table}[htbp]
    \centering
    \setlength{\extrarowheight}{3pt}
    \begin{tabular}{c|cc|cc|cc|cc}
        \toprule
        \multirow{2}{*}{$\tau$} & \multicolumn{4}{c|}{Allen--Cahn equation in \cref{ex:accuracy-AC}} & \multicolumn{4}{c}{PFC equation in \cref{ex:accuracy-PFC}} \\
        \cline{2-9}
         & $e_\infty(\tau)$ & rate & $e_2(\tau)$ & rate & $e_\infty(\tau)$ & rate & $e_2(\tau)$ & rate \\
        \midrule
        $\frac{1}{25}$ & 3.654e-09 & -- & 1.348e-08 & -- & 1.433e-08 & -- & 4.521e-08 & -- \\
        $\frac{1}{40}$ & 2.863e-10 & 5.42 & 1.081e-09 & 5.37 & 9.123e-10 & 5.86 & 2.878e-09 & 5.86 \\
        $\frac{1}{50}$ & 8.208e-11 & 5.60 & 3.128e-10 & 5.56 & 2.378e-10 & 6.03 & 7.502e-10 & 6.03 \\
        $\frac{1}{64}$ & 2.025e-11 & 5.67 & 7.798e-11 & 5.63 & 5.321e-11 & 6.06 & 1.681e-10 & 6.06 \\
        $\frac{1}{80}$ & 5.753e-12 & 5.64 & 2.238e-11 & 5.59 & 1.370e-11 & 6.08 & 4.330e-11 & 6.08 \\
        \bottomrule
    \end{tabular}
    \caption{Errors and convergence rates of the sixth-order IMEX-LMM listed in \cref{tab:lmm6}.}
    \label{tab:convergence}
\end{table}

\begin{example}\label{ex:accuracy-PFC}
    Consider the PFC equation with a source term $g(t, x, y)$, $u_{t} = \Delta [(1+\Delta)^2 u + u^{3} - \varepsilon u] + g(t, x, y)$, subject to the initial condition $u(0, x, y) = \sin(x) \sin(y)$ on the domain $(0, 2\pi)^{2}$. We set the parameter $\varepsilon = 0.01$ and determine $g$ via the solution $u(t, x, y) = \cos(t) \sin(x) \sin(y)$.
\end{example}

The temporal convergence of the sixth-order IMEX-LMM is examined up to $T=1$. \Cref{tab:convergence} presents the $L^\infty$ and $L^2$ errors for a sequence of decreasing time steps $\tau=T/N$, where $N$ is taken from $\{25, 40, 50, 64, 80\}$ as before. The results confirm that our scheme achieves the expected sixth-order accuracy in time.

\subsection{Energy dissipation test}

\begin{example}\label{ex:energy-PFC}
    Consider the PFC equation without the source term in a large-scale domain $(0, 256)^{2}$ with $\varepsilon = 0.25$, following the setup in \cite{LiaoKang-2024IMA-PFC}. Periodic boundary conditions are employed to avoid artificial boundary effects. Three localized random perturbations are introduced on square patches of side length $10$, centered at $(64, 196)$, $(128, 64)$ and $(196, 196)$. The initial data is given by $u(0, x, y) = 0.285 + A(x,y) \cdot \text{rand}(x,y)$, 
    where $\text{\tt rand}(x,y)$ represents a uniform distribution in $(-1,1)$ generated with the fixed random seed $\text{\tt rng}(1)$, and $A(x,y)$ is the perturbation amplitude as in \cite{LiaoKang-2024IMA-PFC}:
    \[A(x,y) = \begin{cases}
        0.25, & \text{on the patch centered at } (64, 196), \\
        0.3, & \text{on the patch centered at } (128, 64), \\
        0.35, & \text{on the patch centered at } (196, 196), \\
        0, & \text{otherwise}.
    \end{cases}\]
\end{example}

The above initial condition $u^0$ has a nonzero average mass. This is consistent
with the zero-mean spaces used in the analysis, since the PFC equation
conserves the spatial average. Let $\bar u:=|\Omega|^{-1}\int_\Omega u^0\,\mathrm{d}\boldsymbol{x}$, where $|\Omega|$ denotes the area of $\Omega$. 
Then the shifted variable
$u-\bar u$ has zero mean, and the zero-mean spaces are used for this shifted
variable.
%
In \cref{ex:energy-PFC}, we use the splitting $\mathcal{L}=(\mathcal{I}+\Delta)^2+\mathcal{I}$ and $f(u)=u^3-(\varepsilon+1) u$. The corresponding potential function is $F(u)=\frac14(u^2-(\varepsilon+1))^2$. With this choice, the energy in our
formulation differs from the standard PFC energy 
\[
E_{\mathrm{PFC}}[u]
:=
\int_\Omega
\left[
\frac12 u(\mathcal{I}+\Delta)^2u
+\frac14u^4-\frac{\varepsilon}{2}u^2
\right]\mathrm{d}\boldsymbol{x}
\]
only by an additive constant
$C_0 \coloneqq \frac{(1+\varepsilon)^2}{4}|\Omega|$, which does not affect the modified energy dissipation law.

\begin{figure}[htbp]
\centering
    \begin{subfigure}{0.48\textwidth}
        \centering
        \includegraphics[width=\textwidth]{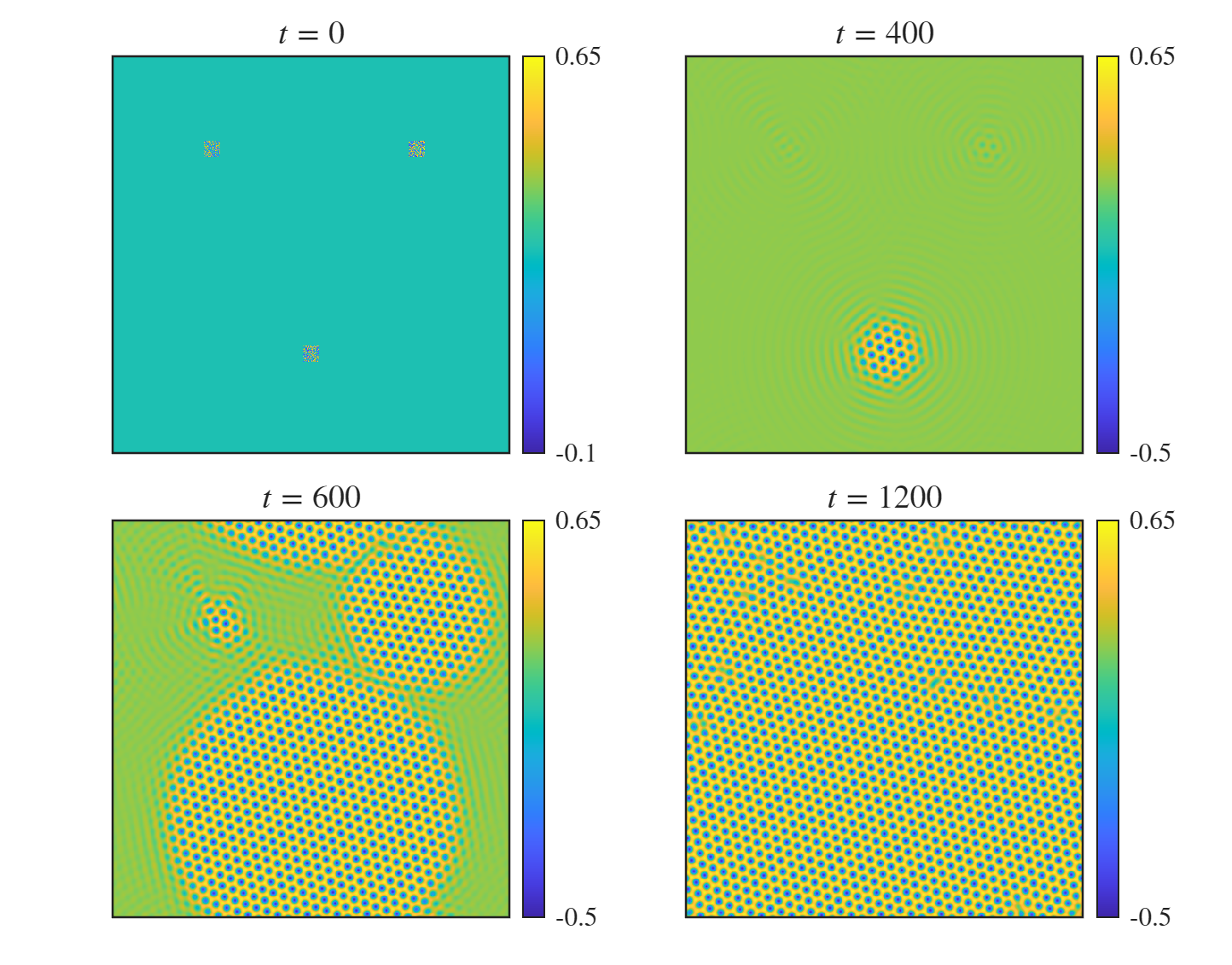}
        \caption{Snapshots of the crystal grain growth}
        \label{subfig:LMM6-PFC-snapshots}
    \end{subfigure}
    \quad
    \begin{subfigure}{0.4\textwidth}
        \centering
        \includegraphics[width=\textwidth]{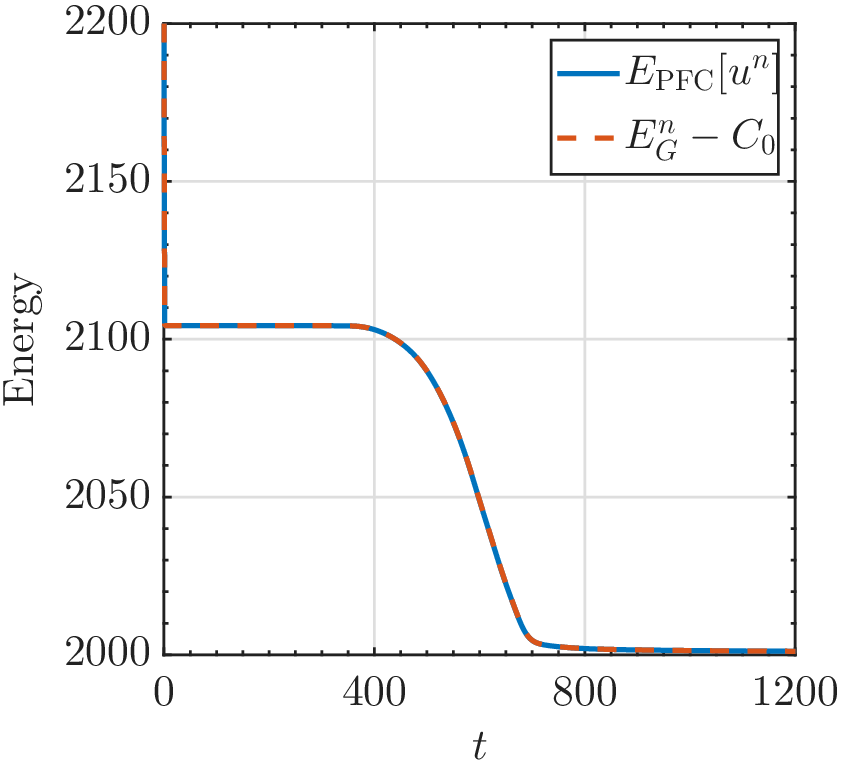}
        \caption{Energy dissipation}
        \label{subfig:LMM6-PFC-energy-dissipation}
    \end{subfigure}
    \caption{Snapshots of the crystal grain growth and energy dissipation of \cref{ex:energy-PFC}.}
\label{fig:LMM6-PFC-ex}
\end{figure}

The simulation is performed up to $T=3000$ with the time step $0.01$. Since the system reaches a steady state and the energy remains nearly constant after $T=1200$, the energy curves are displayed only over the interval $[0,1200]$ for better visualization. The snapshots of the phase field evolution are shown in \cref{subfig:LMM6-PFC-snapshots}. It is observed that the interface migration speed is positively correlated with the initial perturbation amplitude: the patch with the largest amplitude $(A=0.35)$ exhibits the fastest crystal growth. As the three grains expand, they eventually encounter each other and form stable grain boundaries. \Cref{subfig:LMM6-PFC-energy-dissipation} illustrates the evolution of the standard PFC energy and the modified energy. For a better representation, we plot $E_{\mathrm{PFC}}[u^n]=E[u^n]-C_0$ and $E_G^n-C_0$ with $C_0=25600$. This shift of $C_0$ does not affect the energy dissipation behavior.

Since the cubic nonlinearity is not globally Lipschitz, we choose a priori $R=2$ and then obtain $\ell_f=\max_{|s|\le R}|3s^2-(1+\varepsilon)| = 10.75$ for $\varepsilon=0.25$. This is the Lipschitz constant of $f(s)=s^3-(1+\varepsilon) s$ in $[-R,R]$; see this truncation technique in \cref{rem:truncationTech}. It is observed that the computed solution remains in this interval at all time levels and grid points, with $\max_{0\leq n\leq N}\|u^n\|_{\infty} \leq 0.7075 < 2$. Representative snapshots are shown in \cref{subfig:LMM6-PFC-snapshots}. Hence, the truncated and original nonlinearities coincide along the computed trajectory, and the modified energy dissipation property holds.


\section{Conclusion}\label{sec:conclusion}

In this work, we have established a theoretical framework for analyzing the energy dissipation of IMEX-LMMs applied to gradient flows. We derived a necessary and sufficient condition for the existence of the non-negative quadratic modification in  \eqref{eq:structure2}, which transforms the energy dissipation issue to the positivity of two real-valued generating polynomials over the interval $[-1, 1]$. This criterion serves as a straightforward tool to establish the energy dissipation property of general IMEX-LMMs, including the well-known IMEX-BDF$k$ methods. Further, a new sixth-order energy-dissipative IMEX-LMM is constructed within this framework, which appears to be the first, to the best of our knowledge. In addition,
a sixth-order barrier is proved rigorously. Numerical experiments on the Allen--Cahn and PFC equations have confirmed our theoretical findings regarding both the accuracy and energy dissipation.

\bibliographystyle{siamplain}
\bibliography{References}

\end{document}